\newcommand{\mathsym}[1]{{}}
\newcommand{\unicode}[1]{{}}
\definecolor{red}{rgb}{1,0,0}
\date{}
\newtheorem{thm}{Theorem}[section]
\newtheorem{quest}[thm]{Question}
\newtheorem{cor}[thm]{Corollary}
\newtheorem{obs}[thm]{Observation}
\def\noi{\noindent}
\begin{document}

\title{Spectral Bounds for the Connectivity of Regular Graphs with Given Order}
\author{Aida Abiad\thanks{Department of Quantitative Economics, Maastricht University,
Maastricht, The Netherlands; Department of Pure Mathematics and Computer Algebra, Ghent University, Ghent, Belgium (A.AbiadMonge@maastrichtuniversity.nl).}
\and
Boris Brimkov\thanks{Department of Computational and Applied Mathematics, Rice University,
Houston, TX 77005, USA (boris.brimkov@rice.edu).}
\and
Xavier Mart\'inez-Rivera\thanks{Department of Mathematics, Iowa State University, Ames,
IA 50011, USA (xaviermr@iastate.edu).}
\and
Suil O \thanks{Applied Mathematics and Statistics, The State University of New York Korea, Incheon, 21985, Republic of Korea (suil.o@sunykorea.ac.kr).}
\and
Jingmei Zhang\thanks{Department of
Mathematics, University of Central Florida, Orlando,
FL 32816, USA (jmzhang@knights.ucf.edu).}
}


\maketitle

\begin{abstract}
The second-largest eigenvalue and second-smallest Laplacian eigenvalue of a graph are measures of its connectivity. These eigenvalues can be used to analyze the robustness, resilience, and synchronizability of networks, and are related to connectivity attributes such as the vertex- and edge-connectivity, isoperimetric number, and characteristic path length. In this paper, we present two upper bounds for the second-largest eigenvalues of regular graphs and multigraphs of a given order which guarantee a desired vertex- or edge-connectivity. The given bounds are in terms of the order and degree of the graphs, and hold with equality for infinite families of graphs. These results answer a question of Mohar.
\end{abstract}

\noi{\bf Keywords.}
Second-largest eigenvalue;
vertex-connectivity;
edge-connectivity;
regular multigraph;
algebraic connectivity.\\

\noi{\bf AMS subject classifications.}
05C50, 05C40.

\section{Introduction}
\label{section_intro}

Determining the connectivity of a graph is a problem that arises often in various applications -- see for example \cite{kao} and \cite{resilience}. Let $\kappa(G)$ and $\kappa'(G)$ denote the vertex- and edge-connectivity of a connected graph $G$. Let $L(G)=D(G)-A(G)$ be the Laplacian matrix of $G$, where $D(G)$ is the diagonal degree matrix of $G$ and $A(G)$ is the adjacency matrix of $G$. We denote the eigenvalues of $A(G)$ by $\lambda_1(G)\geq \cdots\geq \lambda_n(G)$ and the eigenvalues of $L(G)$ by $0=\mu_1(G)\leq\cdots\leq \mu_n(G)$. In 1973, Fiedler related the vertex-connectivity of a graph $G$ to $\mu_2(G)$ as follows:

\begin{thm}[{\rm Fiedler \cite{Fiedler}}]
\label{thm1.1}
If $G$ is a simple, non-complete graph, then $\kappa(G) \geq \mu_2(G)$.
\end{thm}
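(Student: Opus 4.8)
The plan is to bound $\mu_2(G)$ from above by exhibiting a single well-chosen test vector and invoking the variational (Courant--Fischer) characterization of the second-smallest Laplacian eigenvalue. Writing $L = D - A$ for the Laplacian, the all-ones vector $\mathbf{1}$ is an eigenvector for the smallest eigenvalue $\mu_1(G) = 0$, so restricting the Rayleigh quotient to the orthogonal complement of $\mathbf{1}$ gives
\[
\mu_2(G) = \min_{x \perp \mathbf{1},\, x \neq 0} \frac{x^{T} L x}{x^{T} x}.
\]
Hence \emph{any} nonzero $x$ with $\mathbf{1}^{T}x = 0$ already furnishes the upper bound $\mu_2(G) \le x^{T}Lx / x^{T}x$, and the whole task reduces to engineering an $x$ whose Rayleigh quotient is at most $\kappa(G)$.

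Since $G$ is non-complete it admits a vertex cut, and I would fix a minimum one, $S$, so that $s := |S| = \kappa(G)$ and $G - S$ is disconnected. I then partition $V(G) = V_1 \cup S \cup V_2$, where $V_1$ is the vertex set of one component of $G - S$ and $V_2$ collects the remaining components, and write $n_1 = |V_1|$, $n_2 = |V_2|$. The defining feature of this partition is that \emph{no edge joins $V_1$ and $V_2$}. Next I define $x$ by setting $x_v = n_2$ for $v \in V_1$, $x_v = -n_1$ for $v \in V_2$, and $x_v = 0$ for $v \in S$. This $x$ is nonzero, and $\mathbf{1}^{T} x = n_1 n_2 - n_2 n_1 = 0$, so it is admissible in the minimization above.

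The evaluation uses the standard identity $x^{T} L x = \sum_{uv \in E(G)} (x_u - x_v)^2$. Edges lying within $V_1$, within $V_2$, or within $S$ contribute $0$ since their endpoints carry equal values, and there are no $V_1$--$V_2$ edges to worry about; only edges incident to $S$ survive, contributing $n_2^2$ per $V_1$--$S$ edge and $n_1^2$ per $V_2$--$S$ edge. Letting $e_1$ and $e_2$ denote these two edge counts, I obtain $x^{T} L x = e_1 n_2^2 + e_2 n_1^2$, while $x^{T} x = n_1 n_2^2 + n_2 n_1^2 = n_1 n_2 (n_1 + n_2)$.

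The final step is the crux. Since each vertex of $V_1$ has at most $s = |S|$ neighbors in $S$, we have $e_1 \le s\, n_1$, and symmetrically $e_2 \le s\, n_2$. Substituting gives $x^{T} L x \le s\, n_1 n_2^2 + s\, n_2 n_1^2 = s \cdot x^{T} x$, whence $\mu_2(G) \le x^{T}Lx / x^{T}x \le s = \kappa(G)$. I expect the only genuinely delicate point to be the choice of the weights $n_2$ and $-n_1$: they are calibrated precisely so that orthogonality to $\mathbf{1}$ holds automatically while the degree-based edge bounds $e_i \le s\, n_i$ collapse the numerator exactly to $s$ times the denominator. Everything else is bookkeeping on the quadratic form, and one should double-check that $x \neq 0$ (guaranteed because both $V_1$ and $V_2$ are nonempty when $G - S$ is disconnected).
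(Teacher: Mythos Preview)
Your argument is correct: the test-vector $x$ with values $n_2$, $0$, $-n_1$ on $V_1$, $S$, $V_2$ is orthogonal to $\mathbf{1}$, and the bound $e_i \le s\,n_i$ (each vertex of $V_i$ has at most $|S|$ neighbours in $S$ because the graph is simple) collapses the Rayleigh quotient to at most $s=\kappa(G)$.

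Note, however, that the paper does not supply its own proof of this statement. Theorem~\ref{thm1.1} is quoted as a classical result of Fiedler (1973) and serves only as motivation in the introduction; the paper's original contributions begin in Sections~3 and~4. So there is no ``paper's proof'' to compare against. What you have written is essentially the standard Rayleigh-quotient proof one finds in textbooks (and is close in spirit to Fiedler's original argument), and it is entirely sound.
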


\noindent This seminal result provided researchers with another parameter that quantitatively measures the connectivity of a graph; hence, $\mu_2(G)$ is known as the \emph{algebraic connectivity} of $G$. Fiedler's discovery ignited interest in studying the connectivity of graphs by analyzing the spectral properties of their associated matrices.
Akin to other connectivity measures such as vertex-connectivity, edge-connectivity, and isoperimetric number, the algebraic connectivity of a graph has
applications in the design of reliable communication networks \cite{liu} and in analyzing the robustness of complex networks \cite{jamakovic1,jamakovic2}.

Recall that for a $d$-regular multigraph $G$ on $n$ vertices, $\lambda_i(G) = d - \mu_i(G)$ for $i = 1,\dots, n$. Thus, for regular multigraphs, spectral
bounds related to connectivity are often expressed in terms of the second-largest eigenvalue, instead of the second-smallest Laplacian eigenvalue.

\subsubsection*{Literature review}

Below we survey several results relating $\lambda_2(G)$ to $\kappa'(G)$. Note that Theorem~\ref{thm1.1} implies $\kappa'(G)\geq \mu_2(G)$, since $\kappa'(G)\geq \kappa(G)$.

\begin{thm}[Chandran {\rm \cite{Chandran 2004}}]
Let $G$ be an $n$-vertex $d$-regular simple graph with $\lambda_2(G) < d-1 - \frac{d}{n-d}$. Then $\kappa'(G) = d$.
\end{thm}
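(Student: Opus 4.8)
The plan is to argue by contradiction: assume $\kappa'(G) \le d-1$, fix a minimum edge cut separating $V(G)$ into nonempty parts $S$ and $T = V(G)\setminus S$, and write $s = |S|$ and $b = e(S,T) \le d-1$. The first step is to pass from the adjacency matrix $A = A(G)$ to the $2\times 2$ quotient matrix $B$ induced by the partition $\{S,T\}$. Using $d$-regularity, the row sums of $B$ are all equal to $d$, so $d$ is an eigenvalue of $B$ with all-ones eigenvector; computing the trace then shows the remaining eigenvalue of $B$ equals $d - \frac{bn}{s(n-s)}$. Since the eigenvalues of a quotient matrix interlace those of $A$, this yields $\lambda_2(G) \ge \lambda_2(B) = d - \frac{bn}{s(n-s)}$.

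The second step controls the sizes of the two sides of the cut. If $s \le d$, then every vertex of $S$ has at most $s-1$ neighbors inside $S$ and hence at least $d-s+1$ neighbors in $T$, giving $b \ge s(d-s+1)$. The function $s \mapsto s(d-s+1)$ is concave and equals $d$ at both $s=1$ and $s=d$, so it is at least $d$ throughout $[1,d]$; this forces $b \ge d$, contradicting $b \le d-1$. Hence $s \ge d+1$, and by the symmetric argument applied to $T$, also $n-s \ge d+1$; in particular $n \ge 2d+2$.

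The final step combines these facts. Because $s(n-s)$ is concave in $s$ and $s$ ranges over $[d+1,\, n-d-1]$, it is minimized at the endpoints, so $s(n-s) \ge (d+1)(n-d-1)$. A short computation gives $(d+1)(n-d-1) - (d-1)(n-d) = 2n - 3d - 1 \ge 0$, where the last inequality uses $n \ge 2d+2$; therefore $s(n-s) \ge (d-1)(n-d) \ge b(n-d)$. Rearranging yields $\frac{bn}{s(n-s)} \le \frac{n}{n-d} = 1 + \frac{d}{n-d}$, and substituting into the interlacing bound gives $\lambda_2(G) \ge d - 1 - \frac{d}{n-d}$, contradicting the hypothesis.

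I expect the main obstacle to be the structural second step --- establishing that both sides of a small edge cut must contain at least $d+1$ vertices --- since the eigenvalue interlacing and the closing algebra are routine once the correct size bound is in hand. Care is also needed to confirm that $d$ is indeed the larger eigenvalue of $B$ (so that the smaller one is $\lambda_2(B)$), which follows because $\frac{bn}{s(n-s)} \ge 0$.
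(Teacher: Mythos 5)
Your proof is correct. Note that the paper states this result (Theorem 1.2) only as a citation of Chandran's work and gives no proof of its own, so there is nothing internal to compare against; however, your argument is exactly in the spirit of the paper's own proofs (e.g.\ Theorem 3.1): form the $2\times 2$ quotient matrix of the cut partition, use eigenvalue interlacing to get $\lambda_2(G)\ge d-\frac{bn}{s(n-s)}$, and then use simplicity to force both sides of a cut of size at most $d-1$ to have at least $d+1$ vertices. Each step checks out: the structural bound $b\ge s(d-s+1)\ge d$ for $1\le s\le d$ is the right (and standard) way to rule out small sides; the concavity argument placing the minimum of $s(n-s)$ at $s=d+1$ is valid once $n\ge 2d+2$ is known; and the closing inequality $(d+1)(n-d-1)\ge (d-1)(n-d)$ reduces to $2n-3d-1\ge 0$, which indeed follows from $n\ge 2d+2$. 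Two trivial loose ends you may as well mention explicitly: the degenerate case $b=0$ (disconnected $G$) gives $\lambda_2(G)=d$ and is dispatched immediately, and the upper bound $\kappa'(G)\le d$ needed to conclude equality is just $\kappa'\le\delta$.
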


\begin{thm}[Krivelevich and Sudakov {\rm \cite{K & Sudakov book chapter}}]
\label{KSbound}
Let $G$ be a $d$-regular simple graph with $\lambda_2(G) \leq d-2$. Then $\kappa'(G) \geq d$.
\end{thm}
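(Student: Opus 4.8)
The plan is to argue by contradiction, using the interlacing of the adjacency eigenvalues of $G$ by those of a $2\times 2$ quotient matrix built from a hypothetical small edge cut. First I would dispose of the disconnected case: if $G$ were disconnected, then $d$ would be an eigenvalue of multiplicity at least two, forcing $\lambda_2(G) = d$ and contradicting the hypothesis $\lambda_2(G)\le d-2$; hence $G$ is connected. Since $G$ is $d$-regular we automatically have $\kappa'(G)\le d$, so it suffices to prove $\kappa'(G)\ge d$. Suppose instead that $\kappa'(G)\le d-1$, and fix an edge cut of size $c:=\kappa'(G)\le d-1$ separating $V(G)$ into nonempty parts $V_1$ and $V_2$ with $|V_1|=n_1$, $|V_2|=n_2$, and $n_1+n_2=n$.

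The first key step is a lower bound on the sizes of the two sides. Because $G$ is simple, the subgraph induced on $V_1$ has at most $\binom{n_1}{2}$ edges, and counting degrees in $V_1$ gives $c = d n_1 - 2e(V_1) \ge n_1(d-n_1+1)$. The map $n_1\mapsto n_1(d-n_1+1)$ is at least $d$ for every integer $n_1$ with $1\le n_1\le d$, so $n_1\le d$ would force $c\ge d$, contradicting $c\le d-1$. Therefore $n_1\ge d+1$, and symmetrically $n_2\ge d+1$; in particular $n\ge 2(d+1)$.

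The second key step is the eigenvalue estimate. Consider the $2\times 2$ quotient matrix $B$ of the adjacency matrix $A(G)$ associated with the partition $\{V_1,V_2\}$, whose entries record the average number of neighbors a vertex of one part has in each part. Each row of $B$ sums to $d$, so $\mathbf{1}$ is an eigenvector with eigenvalue $d$; computing the trace of $B$ then gives the remaining eigenvalue as $d - cn/(n_1 n_2)$. By the interlacing of the eigenvalues of a quotient matrix with those of the full matrix, $\lambda_2(G)\ge d - cn/(n_1 n_2)$. Combining this with the hypothesis $\lambda_2(G)\le d-2$ yields $cn\ge 2n_1 n_2$.

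It remains to derive a contradiction from $cn\ge 2n_1 n_2$ together with $c\le d-1$ and $n_1,n_2\ge d+1$. Since $n_1+n_2=n$ with both sides at least $d+1$, the product $n_1 n_2$ is minimized at the extreme, giving $n_1 n_2\ge (d+1)(n-d-1)$. Substituting produces $(d-1)n\ge 2(d+1)(n-d-1)$, which rearranges to $n\le 2(d+1)^2/(d+3)$. But $2(d+1)^2/(d+3) < 2(d+1)\le n$, a contradiction, so $\kappa'(G)=d$. I expect the main obstacle to be the size bound $n_1,n_2\ge d+1$: without it the crude estimate $n_1 n_2\ge n-1$ is far too weak to close the argument, so the simple-graph hypothesis must be exploited precisely at this point.
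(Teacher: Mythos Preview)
The paper does not give its own proof of this statement: it appears in the literature-review section as a result quoted from Krivelevich and Sudakov, with no argument supplied. So there is nothing in the paper to compare your approach against directly.

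That said, your proof is correct. The two ingredients --- the size bound $n_1,n_2\ge d+1$ obtained from $c\ge n_1(d-n_1+1)$ in a simple graph, and the interlacing bound $\lambda_2(G)\ge d-cn/(n_1n_2)$ from the $2\times2$ quotient matrix --- are exactly the tools the paper itself deploys in its proof of Theorem~\ref{theorem1} (Case~1 there derives the same $n_1\ge d+1$, and the same quotient-matrix computation appears). Your final arithmetic is also sound: from $(d-1)n\ge 2(d+1)(n-d-1)$ one gets $n\le 2(d+1)^2/(d+3)<2(d+1)\le n$, a clean contradiction. In short, you have supplied a self-contained proof of the cited theorem using precisely the machinery the paper relies on elsewhere.
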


\noindent In 2010, Theorem \ref{KSbound} was improved by Cioab\u{a} \cite{Cioaba 2010} as follows.
\begin{thm}[Cioab\u{a} {\rm \cite{Cioaba 2010}}]\label{cioba}
Let $t$ be a nonnegative integer less than $d$, and let $G$ be a $d$-regular, simple graph with $\lambda_2(G) < d - \frac{2t}{d+1}$.
Then $\kappa'(G) \geq t+1$.
\end{thm}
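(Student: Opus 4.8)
The plan is to argue by contradiction using eigenvalue interlacing applied to a quotient matrix of the adjacency matrix. Suppose $\kappa'(G) \leq t$. Then $G$ admits an edge cut, i.e.\ a partition of $V(G)$ into nonempty sets $S$ and $\bar S = V(G)\setminus S$ with $e(S,\bar S) \leq t$, where $e(S,\bar S)$ is the number of edges joining the two parts. Write $s = |S|$ and assume without loss of generality that $s \leq n/2$. The goal is to show that such a thin cut forces $\lambda_2(G)$ to be large, contradicting the hypothesis.

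The first step I would carry out is to pin down the sizes of the two parts. Since $G$ is $d$-regular, the number of edges inside $S$ equals $(ds - e(S,\bar S))/2 \geq (ds - t)/2$, while simplicity forces this count to be at most $\binom{s}{2}$. Rearranging gives $s(d+1-s) \leq t$. Because $t < d$ and the concave function $s \mapsto s(d+1-s)$ takes value at least $d$ at every integer $s$ with $1 \leq s \leq d$ (the minimum over this range occurs at the endpoints $s=1$ and $s=d$), the inequality $s(d+1-s)\le t<d$ is impossible unless $s \geq d+1$. Applying the identical argument to $\bar S$ yields $n-s \geq d+1$, and hence $n \geq 2(d+1)$.

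Next I would introduce the quotient matrix $B$ of $A(G)$ with respect to the partition $\{S,\bar S\}$. Its row sums are all equal to $d$, so $d$ is its largest eigenvalue and its second eigenvalue is $\theta_2 = \operatorname{tr}(B) - d$. Computing the two diagonal entries $b_{11} = d - e(S,\bar S)/s$ and $b_{22} = d - e(S,\bar S)/(n-s)$ gives
\[
\theta_2 \;=\; d \;-\; e(S,\bar S)\cdot \frac{n}{s(n-s)}.
\]
By eigenvalue interlacing, $\lambda_2(G) \geq \theta_2$. Combining this with $e(S,\bar S)\le t$ and the size estimates $s,\,n-s \geq d+1$ together with $n \geq 2(d+1)$ (so that $\tfrac{n}{n-d-1}\le 2$), I obtain
\[
\lambda_2(G) \;\geq\; d - t\cdot\frac{n}{s(n-s)} \;\geq\; d - t\cdot\frac{n}{(d+1)(n-d-1)} \;\geq\; d - \frac{2t}{d+1},
\]
which contradicts $\lambda_2(G) < d - \tfrac{2t}{d+1}$, completing the argument.

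The main obstacle is the size estimate in the second step. The bare interlacing inequality $\lambda_2(G) \geq d - t n/(s(n-s))$ is far too weak on its own, since $s(n-s)$ could in principle be as small as $n-1$ (for a singleton $S$), degrading the bound to roughly $d - t$. The crucial and least obvious point is to extract $s,\,n-s \geq d+1$ from simplicity together with the assumption $t<d$; it is precisely this that forces the factor $\tfrac{n}{(d+1)(n-d-1)}$ down to at most $\tfrac{2}{d+1}$ and reproduces the exact constant appearing in the statement. I expect the threshold $t<d$ to be used essentially here rather than anywhere else.
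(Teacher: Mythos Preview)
The paper does not prove this statement; it appears in the literature review as a citation of Cioab\u{a}'s result \cite{Cioaba 2010}, with no argument supplied. There is therefore no ``paper's own proof'' to compare against.

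That said, your proof is correct and is essentially Cioab\u{a}'s original argument. The two key ingredients --- the interlacing bound $\lambda_2(G)\ge d - e(S,\bar S)\cdot\frac{n}{s(n-s)}$ from the $2\times 2$ quotient matrix, and the size estimate $s,\,n-s\ge d+1$ extracted from simplicity via $s(d+1-s)\le e(S,\bar S)\le t<d$ --- are exactly the standard ones. Your handling of the disconnected case is implicit but fine: if $e(S,\bar S)=0$ the same inequality $s(d+1-s)\le 0$ still forces $s\ge d+1$, and the interlacing bound gives $\lambda_2(G)\ge d$. It is also worth noting that this same quotient-matrix-plus-size-estimate template is precisely what the present paper uses in its own Theorem~\ref{theorem1} (Case~1), where the bound is refined by keeping the dependence on $n$ rather than passing to the worst case $n=2(d+1)$.
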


\noindent In the same paper, Cioab\u{a} also gave improvements of Theorem \ref{cioba} for the following two particular cases.

\begin{thm}[Cioab\u{a} {\rm \cite{Cioaba 2010}}]\label{Cioba improvement for t=1}
Let $d \geq 3$ be an odd integer and let $\pi(d)$
denote the largest root of
$x^3 - (d-3)x^2 - (3d-2)x - 2 = 0$.
If $G$ is a $d$-regular, simple graph such that
$\lambda_2(G) <\pi(d)$, then $\kappa'(G) \geq 2$.
\end{thm}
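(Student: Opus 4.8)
The plan is to prove the contrapositive: assuming $\kappa'(G)=1$, I will show $\lambda_2(G)\ge \pi(d)$. A single cut edge $uv$ splits $G$ into components $G_1\ni u$ and $G_2\ni v$; inside $G_i$ the bridge endpoint has degree $d-1$ and every other vertex has degree $d$. Summing degrees in $G_i$ gives $d\,|V(G_i)|-1$ even, so, since $d$ is odd, each $|V(G_i)|$ is odd; as $|V(G_i)|\ge d+1$ and $d+1$ is even, in fact $|V(G_i)|\ge d+2$. This parity step is exactly where the oddness of $d$ enters, and it is what separates this statement from the even-$d$ case.

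The central device is the matrix $M_i:=A(G_i)-e_{w_i}e_{w_i}^{\top}$, where $w_1=u$, $w_2=v$ and $e_{w}$ is the standard basis vector at $w$; that is, $A(G_i)$ with its diagonal entry at the bridge endpoint lowered by $1$. Set $\theta_i:=\lambda_{\max}(M_i)$. First I would establish the reduction $\lambda_2(G)\ge\min(\theta_1,\theta_2)$. Let $g_i$ be a top eigenvector of $M_i$, extended by zero to $V(G)$; the two have disjoint support and span a $2$-dimensional space. For $x=\alpha g_1+\beta g_2$, using that the only edge between the two supports is $uv$, a direct computation gives
\begin{equation*}
x^{\top}A(G)x=\alpha^2\theta_1\|g_1\|^2+\beta^2\theta_2\|g_2\|^2+\bigl(\alpha\,g_1(u)+\beta\,g_2(v)\bigr)^2,
\end{equation*}
since $g_i^{\top}A(G_i)g_i=\theta_i\|g_i\|^2+g_i(w_i)^2$ and the cross term contributes $g_1(u)g_2(v)$. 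Dropping the nonnegative square yields $x^{\top}A(G)x\ge \min(\theta_1,\theta_2)\,x^{\top}x$ on the whole span, and the Courant–Fischer characterization of $\lambda_2$ gives the claim.

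It then remains to prove the purely local statement that $\theta(H)=\lambda_{\max}\!\bigl(A(H)-e_we_w^{\top}\bigr)\ge\pi(d)$ for every connected graph $H$ with one vertex $w$ of degree $d-1$ and all others of degree $d$. The extremal configuration is the unique such graph on $d+2$ vertices, namely $G^{*}=\overline{P_3\cup\frac{d-1}{2}K_2}$ (its complement has one vertex of degree $2$ and the rest of degree $1$). The partition $\{w\},\,N(w),\,R$ of $V(G^{*})$ into the bridge endpoint, its $d-1$ neighbors, and the two remaining vertices is equitable, and the quotient of $A(G^{*})-e_we_w^{\top}$ equals
\begin{equation*}
\begin{pmatrix}-1 & d-1 & 0\\ 1 & d-3 & 2\\ 0 & d-1 & 1\end{pmatrix},
\end{equation*}
whose characteristic polynomial is exactly $x^3-(d-3)x^2-(3d-2)x-2$. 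Hence $\theta(G^{*})=\pi(d)$, and two disjoint copies of $G^{*}$ joined by a bridge realize $\lambda_2=\pi(d)$, showing the bound is sharp and producing the claimed infinite family.

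The main obstacle is the universal inequality $\theta(H)\ge\pi(d)=\theta(G^{*})$, i.e. that $G^{*}$ minimizes $\theta$ over all admissible components. The naive test vector $\mathbf 1$ only gives $\theta(H)\ge d-2/|V(H)|$, which for small $|V(H)|$ falls short of $\pi(d)$, so a finer argument is needed. I would split into two regimes. For large $|V(H)|$, I would combine $\lambda_{\max}(A(H))\ge d-1/|V(H)|$ (the average degree) with a bound on the Perron weight at $w$ to push $\theta(H)$ above $\pi(d)$. For small $|V(H)|$, the delicate case, I would analyze the $3\times 3$ quotient of $A(H)-e_we_w^{\top}$ on $\{w\},N(w),R$, whose entries $a,b,c,f$ are constrained by $a+b=d-1$, $c+f=d$, and $(d-1)b=|R|\,c$, and minimize its largest eigenvalue over the feasible, graph-realizable parameters; the expectation, to be verified, is that the minimum is attained precisely at $G^{*}$ and equals $\pi(d)$. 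Controlling this optimization, and certifying that no admissible $H$ drives the largest eigenvalue below $\pi(d)$, is where essentially all the work lies.
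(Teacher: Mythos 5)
This statement is quoted in the paper's literature review from \cite{Cioaba 2010}; the paper itself contains no proof of it, so there is no internal argument to compare against. Judged on its own terms, your proposal gets the architecture right and it is essentially the architecture of Cioab\u{a}'s original argument: the parity step forcing $|V(G_i)|\ge d+2$ is correct, the identity $x^{\top}A(G)x=\alpha^2\theta_1\|g_1\|^2+\beta^2\theta_2\|g_2\|^2+(\alpha g_1(u)+\beta g_2(v))^2$ checks out, the Courant--Fischer step gives $\lambda_2(G)\ge\min(\theta_1,\theta_2)$ cleanly, and your identification of the extremal component $G^{*}=\overline{P_3\cup\frac{d-1}{2}K_2}$ is right --- I verified that the quotient matrix you wrote down has characteristic polynomial exactly $x^3-(d-3)x^2-(3d-2)x-2$, so $\theta(G^{*})=\pi(d)$.

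However, there is a genuine gap, and you name it yourself: the universal inequality $\theta(H)\ge\pi(d)$ for every connected $H$ with one vertex of degree $d-1$ and the rest of degree $d$ is the entire content of the theorem, and your proposal only outlines a two-regime strategy for it, ending with ``the expectation, to be verified.'' Neither regime is actually carried out. For large $|V(H)|$ you would need a quantitative bound on the Perron weight $\phi(w)^2/\|\phi\|^2$ at the low-degree vertex to get $\theta(H)\ge\lambda_1(A(H))-\phi(w)^2/\|\phi\|^2\ge d-O(1/|V(H)|)>\pi(d)$, and no such bound is proved. For small $|V(H)|$, the partition $\{w\},N(w),R$ is not equitable in general, so you are minimizing $\lambda_1$ of a non-symmetric quotient over a parameter region defined only by the linear constraints $a+b=d-1$, $c+f=d$, $(d-1)b=|R|c$; it is not established (and is not obvious) that the minimum over this relaxation is still $\pi(d)$ rather than something smaller that only graph-realizability would exclude. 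Until one of these analyses is completed --- or replaced by a different argument, such as the edge-count/Rayleigh-quotient manipulations Cioab\u{a} uses --- the proof is incomplete at its central step.
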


\noindent The value of $\pi(d)$ above is approximately $d - \frac{2}{d+5}$.

\begin{thm}[Cioab\u{a} {\rm \cite{Cioaba 2010}}]\label{Cioba improvement for t=2}
Let $d \geq 3$ be any integer.
Let $G$ be a $d$-regular, simple graph with
\begin{equation*}
\lambda_2(G) < \frac{d-3+ \sqrt{(d+3)^2 - 16}}{2}.
\end{equation*}
Then $\kappa'(G) \geq 3$.
\end{thm}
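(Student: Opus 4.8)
The plan is to argue by contraposition: assuming $\kappa'(G) \le 2$, I will produce a low-dimensional structure forcing $\lambda_2(G) \ge \theta$, where $\theta := \frac{d-3+\sqrt{(d+3)^2-16}}{2}$. A direct computation shows that $\theta$ is the larger root of $f(x) = x^2 - (d-3)x - (3d-4)$, since $(d-3)^2 + 4(3d-4) = d^2+6d-7 = (d+3)^2 - 16$; keeping $f$ in view is the organizing principle. First I would fix a minimum edge cut $[S,\bar S]$ with $|S| \le |\bar S|$ and $r := e(S,\bar S) = \kappa'(G) \in \{1,2\}$. A short counting argument pins down the coarse structure: because $G$ is $d$-regular and only $r \le 2$ edges leave $S$, at most $r$ vertices of $S$ meet a cut edge, so (as $d \ge 3$) some vertex of $S$ has all $d$ of its neighbors inside $S$; hence $|S| \ge d+1$, and likewise $|\bar S| \ge d+1$ and $n \ge 2d+2$.

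The engine is eigenvalue interlacing applied to a quotient matrix, guided by the extremal example: two copies of $K_{d+1}$, each with one edge deleted, joined by the two edges between the deleted-edge endpoints. Its nontrivial symmetry-reduced quotient is exactly $\mtx{-1 & d-1 \\ 2 & d-2}$, whose characteristic polynomial is $f$ and whose larger root is $\theta$. Mirroring this, I would partition $V(G)$ into four classes: the cut-edge endpoints $S_1$ in $S$, the interior $S_0 = S \setminus S_1$, and the two analogous classes in $\bar S$. Writing $B$ for the associated quotient matrix (average row sums), its eigenvalues are real and interlace those of the adjacency matrix, so $\lambda_2(G) \ge \lambda_2(B)$ and it suffices to prove $\lambda_2(B) \ge \theta$. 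The cut structure fixes the cross-block entries of $B$: each cut-endpoint class has average degree $r/|S_1|$ into the opposite side and zero elsewhere. Meanwhile $d$-regularity forces every row sum of $B$ to equal $d$ and, via the edge-count identity $|S_1|\,b_{12} = |S_0|\,b_{21}$ relating the two internal $S$-averages, leaves only a handful of free parameters (the class sizes and one internal average per side).

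From here the task is to bound $\lambda_2(B)$ below by $\theta$ uniformly over all admissible quotient matrices. I would organize this by the combinatorial type of the cut — the two cut edges disjoint, or sharing an endpoint on the $S$-side (and symmetrically on $\bar S$), together with the degenerate bridge case $r=1$ — noting that a simple graph cannot have two cut edges sharing endpoints on both sides. In each type $\lambda_2(B)$ becomes an explicit function of the class sizes and internal averages, all constrained as above. \emph{The main obstacle is precisely this optimization}: showing $\lambda_2(B)$ is minimized at the symmetric configuration $|S| = |\bar S| = d+1$ with two disjoint cut edges, where the $S \leftrightarrow \bar S$ symmetry collapses $B$ onto $\mtx{-1 & d-1 \\ 2 & d-2}$ and $\lambda_2(B) = \theta$. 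The delicate points are establishing the correct monotonicity in the class sizes (enlarging a side only pushes $\lambda_2(B)$ upward) and verifying that the asymmetric and shared-endpoint configurations yield strictly larger values; the bridge case $r=1$ should be easier, since a single cut edge makes $G$ less connected and raises the bound. Assembling the cases gives $\lambda_2(G) \ge \lambda_2(B) \ge \theta$, contradicting $\lambda_2(G) < \theta$ and finishing the proof, with sharpness certified by the extremal family above.
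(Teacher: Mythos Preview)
This theorem is not proved in the present paper: it appears only in the literature review, cited from Cioab\u{a}'s 2010 paper, so there is no proof here to compare against. Your outline is, in fact, essentially Cioab\u{a}'s own strategy --- interlacing against a quotient matrix built from the partition $\{S_0,S_1,\bar S_1,\bar S_0\}$ and identifying the extremal configuration as two copies of $K_{d+1}$ with an edge deleted, joined by a $2$-matching --- so the plan is sound.

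That said, what you have written is a plan rather than a proof. You correctly flag the optimization of $\lambda_2(B)$ over all admissible quotient matrices as ``the main obstacle,'' and then stop short of resolving it: the claimed monotonicity in the class sizes, the comparison across the shared-endpoint and disjoint cut-edge configurations, and the $r=1$ bridge case are all asserted but not argued. These are exactly the computations that carry the weight of the theorem, and none of them is completely routine (the $4\times 4$ quotient does not symmetrize for free when $|S|\neq|\bar S|$, and the monotonicity is not immediate from the matrix entries). Two small points as well: you silently drop the disconnected case $\kappa'(G)=0$ (easy, since then $\lambda_2(G)=d>\theta$), and your argument for $|S|\ge d+1$ presupposes $|S|\ge 3$, which needs a one-line justification before you invoke the existence of an interior vertex. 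If you intend this as a full proof, the optimization step must actually be carried out.
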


\noindent The value of $\frac{d-3+ \sqrt{(d+3)^2 - 16}}{2}$ above is approximately $d - \frac{4}{d+3}$. Note that Theorems \ref{Cioba improvement for t=1} and \ref{Cioba improvement for t=2} are best possible, as there are examples showing that the upper bounds cannot be lowered. The following extension of these results to $t\geq 3$ was conjectured in the Ph.D. thesis of the fourth author~\cite{O-thesis} and was resolved in \cite{suil_in_prep}. 

\begin{thm}[O, Park, Park, and Yu {\rm \cite{suil_in_prep}}]
Let $3 \le t \le d-1$ and let $G$ be a $d$-regular simple graph with 
\begin{equation*}
\lambda_2(G)<
\begin{cases}
\frac{d-3+\sqrt{(d+3)^2-8t}}{2} &\text{if t is even}\\
\frac{d-4+\sqrt{(d+4)^2-8t}}{2} &\text{if t is odd.}
\end{cases}
\end{equation*}
Then $\kappa'(G) \ge t+1$.

\end{thm}

In 2016, O \cite{O-Edge-conn from eigenvalues} generalized Fiedler's result to multigraphs, and established similar bounds to those above.

\begin{thm}[O {\rm \cite{O-Edge-conn from eigenvalues}}]\label{O, bound 1 for multigraphs}
Let $G$ be a connected, $d$-regular multigraph with
\begin{equation*}
\lambda_2(G) < \frac{d-1+ \sqrt{9d^2-10d+17}}{4}.
\end{equation*}
Then $\kappa'(G) \geq 2$.
\end{thm}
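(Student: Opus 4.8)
The plan is to prove the contrapositive: assuming $\kappa'(G) \le 1$, I will show that $\lambda_2(G) \ge \frac{d-1+\sqrt{9d^2-10d+17}}{4}$. Since $G$ is connected, $\kappa'(G)\le 1$ forces $\kappa'(G)=1$, so $G$ has a cut-edge (bridge) $e=xy$. Deleting $e$ splits $G$ into two components $G_1\ni x$ and $G_2\ni y$; set $n_i:=|V(G_i)|$. A first observation is that $n_1,n_2\ge 3$: as $G$ is $d$-regular and loopless, $x$ sends its $d-1\ge 1$ non-bridge edges into $G_1$, and if $n_1\le 2$ a degree count on the unique other vertex of $G_1$ (which can only reach $x$) forces $x$ to have degree exceeding $d$, a contradiction; likewise for $n_2$.

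The main tool is eigenvalue interlacing for quotient matrices: for any partition of $V(G)$ into parts $V_1,\dots,V_m$, the $m\times m$ matrix $B$ whose $(i,j)$ entry is the average row sum of the block $A(G)[V_i,V_j]$ has real eigenvalues interlacing those of $A(G)$, so in particular $\lambda_2(G)\ge\lambda_2(B)$. I would apply this to the partition $V_1=\{x\}$, $V_2=V(G_1)\setminus\{x\}$, $V_3=\{y\}$, $V_4=V(G_2)\setminus\{y\}$. The crucial point is that $d$-regularity pins down $B$ entirely in terms of $n_1,n_2,d$, independent of the internal structure of $G_1,G_2$: the block from $\{x\}$ carries its $d-1$ non-bridge edges, the bridge contributes the single cross entry, and averaging the internal block of $V(G_1)\setminus\{x\}$ gives $d-\frac{d-1}{n_1-1}$. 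Thus
\begin{equation*}
B=\begin{pmatrix} 0 & d-1 & 1 & 0 \\ \frac{d-1}{n_1-1} & d-\frac{d-1}{n_1-1} & 0 & 0 \\ 1 & 0 & 0 & d-1 \\ 0 & 0 & \frac{d-1}{n_2-1} & d-\frac{d-1}{n_2-1} \end{pmatrix},
\end{equation*}
which has all row sums equal to $d$, so $\lambda_1(B)=d$.

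It then remains to show $\lambda_2(B)\ge \frac{d-1+\sqrt{9d^2-10d+17}}{4}$ for all integers $n_1,n_2\ge 3$, with equality at $n_1=n_2=3$. In the balanced case $n_1=n_2=3$ the matrix is invariant under swapping the two halves, so I would decompose into the symmetric and antisymmetric subspaces; the antisymmetric part is the $2\times 2$ block $\left(\begin{smallmatrix} -1 & d-1 \\ (d-1)/2 & (d+1)/2\end{smallmatrix}\right)$, whose larger eigenvalue is exactly the positive root of $2x^2-(d-1)x-(d^2-d+2)=0$, namely the claimed bound. For general $n_1,n_2$ I would either evaluate $\det(xI-B)$ at the threshold and check its sign, or argue monotonicity: increasing $n_i$ decreases the off-diagonal entry $\frac{d-1}{n_i-1}$ and raises the diagonal entry toward $d$, pushing $\lambda_2(B)$ upward, so the minimum over $n_1,n_2\ge 3$ is attained at $n_1=n_2=3$. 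Combining interlacing with this minimum gives $\lambda_2(G)\ge\lambda_2(B)\ge \frac{d-1+\sqrt{9d^2-10d+17}}{4}$, the desired contradiction.

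The main obstacle is this last step: establishing that $\lambda_2(B)$, as a function of the two discrete parameters, is genuinely minimized at the balanced value. The balanced computation is clean thanks to the swap symmetry, but for $n_1\ne n_2$ that symmetry is lost and one must control the second-largest root of a quartic characteristic polynomial and verify coordinatewise monotonicity; I expect this to reduce to showing a discriminant-type expression is sign-definite, which is routine but technically delicate. The tightness claim will follow from the explicit $n_1=n_2=3$ extremal multigraphs — two triangles with suitable parallel-edge multiplicities joined by the bridge when $d$ is odd — which realize equality throughout the interlacing chain.
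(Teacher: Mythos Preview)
This theorem is not proved in the present paper: it appears in the literature-review portion of the introduction as a result of O~\cite{O-Edge-conn from eigenvalues}, cited without proof. There is therefore no ``paper's own proof'' to compare your proposal against.

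That said, your approach is exactly the machinery the paper deploys for its own results. In particular, the paper's Theorem~\ref{theo31} uses the same four-part partition $\{S_1,\{v_1\},\{v_2\},S_2\}$ around a cut-edge, forms the identical quotient matrix (your $B$ is the paper's $Q$ up to a row/column permutation), and invokes interlacing via Corollary~\ref{cor}. The paper also records (Observation~4.4) that the $n=6$ instance of its bound $\rho(d,n)$ equals $\frac{1}{4}(d-1+\sqrt{9d^2-10d+17})$, confirming your balanced calculation at $n_1=n_2=3$. Your parity argument forcing $n_i\ge 3$ matches the paper's degree-sum observation that $d(s_i+1)$ is odd, hence $s_i\ge 2$.

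On correctness: the outline is sound, and the one step you flag as the obstacle---monotonicity of $\lambda_2(B)$ in $n_1,n_2$---is genuine but tractable. The paper handles the analogous minimization in Theorem~\ref{theo31} not by proving monotonicity directly but by fixing a test value of $x$ and checking the sign of $\det(xI-Q)$ symbolically; you could do the same here with $x=\frac{d-1+\sqrt{9d^2-10d+17}}{4}$, or carry out the monotonicity argument you sketch. Either route closes the gap.
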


\begin{thm}[O {\rm \cite{O-Edge-conn from eigenvalues}}]\label{O, bound 2 for multigraphs}
Let $t \geq 2$ and let $G$ be a connected,
$d$-regular multigraph.
If $\lambda_2(G) < d-t$, then $\kappa'(G) \geq t+1$.
If $t$ is odd and $\lambda_2(G) < d-t+1$,
then $\kappa'(G) \geq t+1$.
\end{thm}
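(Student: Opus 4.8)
The plan is to prove the contrapositive of each statement: assuming $\kappa'(G) \le t$, I will show $\lambda_2(G) \ge d - t$, and when $t$ is odd the stronger bound $\lambda_2(G) \ge d - t + 1$. Let $A$ be the adjacency matrix of $G$; since $G$ is connected and $d$-regular, $\lambda_1(A) = d$ with eigenvector $\mathbf{1}$. Let $(S, \overline{S})$ be a minimum edge cut, so that $c := e(S, \overline{S}) = \kappa'(G) \le t$, and write $s = |S|$ with $s \le n/2$. Because isolating a single vertex produces a cut of size exactly $d$, a minimum cut of size $c \le t < d$ must satisfy $2 \le s \le n-2$; the degenerate case $c = d$ forces $\kappa'(G) = d \ge t+1$ and is immediate, so I assume the meaningful range $t \le d-1$ throughout.

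The main tool is eigenvalue interlacing applied to the quotient matrix of $A$ induced by the partition $\{S, \overline{S}\}$. Using $2e(S) = ds - c$ and $2e(\overline{S}) = d(n-s) - c$, this quotient matrix is
$$B = \begin{bmatrix} d - \frac{c}{s} & \frac{c}{s} \\ \frac{c}{n-s} & d - \frac{c}{n-s} \end{bmatrix},$$
whose row sums are both $d$; hence its eigenvalues are $d$ and $d - c\,\frac{n}{s(n-s)}$. Cauchy interlacing for quotient matrices gives $\lambda_2(A) \ge \lambda_2(B) = d - c\,\frac{n}{s(n-s)}$. For $2 \le s \le n-2$ and $n \ge 4$ one checks $s(n-s) \ge n$, so $\frac{n}{s(n-s)} \le 1$; combined with $c \le t$ this yields $\lambda_2(G) \ge d - t$, proving the first statement. (The small cases $n \le 3$ force a single-vertex cut and are handled as above.)

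For the refinement when $t$ is odd, the extra $+1$ will come from a parity argument on $c$. Since $2e(S) = ds - c$ is even, $c \equiv ds \pmod{2}$, and likewise $c \equiv d(n-s) \pmod{2}$. If $d$ is even, then $c$ is even, so $c \ne t$ (as $t$ is odd) and in fact $c \le t-1$; the interlacing bound then gives $\lambda_2(G) \ge d - (t-1) = d - t + 1$. If $d$ is odd, either $c \le t-1$ (and we finish exactly as before) or $c = t$; in the latter case the parity relations force both $s$ and $n-s$ to be odd, hence $s, n-s \ge 3$. Writing $p = s$ and $q = n-s$, it then suffices to upgrade the interlacing estimate to $d - t\,\frac{n}{pq} \ge d - t + 1$, i.e.\ $t\bigl(pq - (p+q)\bigr) \ge pq$; since an odd $t \ge 2$ satisfies $t \ge 3$, this reduces to $2pq \ge 3(p+q)$, which holds for all $p, q \ge 3$.

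I expect the odd-$t$ refinement to be the crux. The crude interlacing inequality only yields $\lambda_2(G) > d - t$ when $c = t$, and closing the gap to the full integer $d - t + 1$ requires simultaneously exploiting the parity constraint (to force $s, n-s \ge 3$, equivalently to keep $s(n-s) - n$ bounded away from $0$) and the bound $t \ge 3$. Care will also be needed at the boundaries of the case analysis — the transition $s(n-s) = n$, the symmetric roles of $S$ and $\overline{S}$, and the graphs of small order — to ensure the inequalities are strict where required and that equality is attained only by the claimed extremal families.
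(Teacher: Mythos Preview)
This theorem is not proved in the present paper; it appears only in the literature review as a citation of a result from \cite{O-Edge-conn from eigenvalues}. Consequently there is no proof in the paper to compare your attempt against.

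That said, your argument is correct and is precisely the standard route one would expect: apply quotient-matrix interlacing to the partition $\{S,\overline{S}\}$ determined by a minimum edge cut, use $s(n-s)\ge n$ for $2\le s\le n-2$ to obtain $\lambda_2(G)\ge d-c\ge d-t$, and for odd $t$ invoke the parity relation $c\equiv ds\pmod 2$ to force either $c\le t-1$ or (when $c=t$ and $d$ is odd) both parts of the cut to have size at least $3$, after which the arithmetic inequality $2pq\ge 3(p+q)$ for $p,q\ge 3$ closes the gap. One small point worth making explicit in your write-up: the step ``since $t\ge 3$, $t\bigl(pq-(p+q)\bigr)\ge pq$ reduces to $3\bigl(pq-(p+q)\bigr)\ge pq$'' tacitly uses $pq-(p+q)\ge 0$, which does hold for $p,q\ge 3$ since $(p-1)(q-1)\ge 4>1$. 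With that remark added, the proof is complete. The implicit hypothesis $t\le d-1$ that you isolate is indeed necessary for the statement to be nonvacuous, as $\kappa'(G)\le d$ always.
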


Note that Theorem \ref{O, bound 2 for multigraphs} is best possible for multigraphs. For every $0<t<d$, O \cite{O-Edge-conn from eigenvalues} found examples where the bound in Theorem \ref{O, bound 2 for multigraphs} is tight.


The results above make assertions about the edge-connectivity of a graph based on its eigenvalues. In  more recent papers, Cioab\u{a} and Gu \cite{CioabaGu2016} and O \cite{O- alg conn mult graphs}
also established analogous results for vertex-connectivity.

\begin{thm}[Cioab\u{a} and Gu {\rm \cite{CioabaGu2016}}]\label{CioabaGu16}
Let $G$ be a connected $d$-regular simple graph, $d\geq 3$, and
\begin{equation*}
\lambda_2(G)<
\begin{cases}
\frac{d-2+\sqrt{d^2+12}}{2} &\text{if d is even}\\
\frac{d-2+\sqrt{d^2+8}}{2} &\text{if d is odd.}
\end{cases}
\end{equation*}
Then, $\kappa(G)\geq 2$.
\end{thm}

\begin{thm}[O {\rm \cite{O- alg conn mult graphs}}]\label{Suilthm1.9}
Let $G$ be a $d$-regular multigraph that is not the $2$-vertex $d$-regular multigraph.
If $\lambda_2(G) < \frac{3d}{4}$, then $\kappa(G) \geq 2$.
\end{thm}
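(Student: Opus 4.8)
The plan is to prove the contrapositive: assuming $\kappa(G)\le 1$, I will show $\lambda_2(G)\ge \frac{3d}{4}$. Since $G$ is not the $2$-vertex $d$-regular multigraph, we have $n\ge 3$. If $G$ is disconnected, then $d$ is an eigenvalue of multiplicity at least two, so $\lambda_2(G)=d>\frac{3d}{4}$ and we are done. Hence I may assume $G$ is connected with a cut vertex $v$. Removing $v$ splits $G$ into components, which I group into two nonempty vertex sets $X$ and $Y$ having no edges between them; writing $d_X,d_Y$ for the numbers of edges from $v$ into $X,Y$, regularity of $v$ gives $d_X+d_Y=d$, and connectivity forces $d_X,d_Y\ge 1$.

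The central tool is eigenvalue interlacing applied to the quotient matrix $Q$ of the adjacency matrix of $G$ for the partition $(\{v\},X,Y)$. Setting $a=|X|$, $b=|Y|$, $\alpha=d_X/a$, and $\gamma=d_Y/b$, regularity lets me write
\[
Q=\begin{pmatrix} 0 & d_X & d_Y \\ \alpha & d-\alpha & 0 \\ \gamma & 0 & d-\gamma \end{pmatrix},
\]
whose row sums all equal $d$; thus $\beta_1=d$, and the other two eigenvalues $\beta_2\ge\beta_3$ satisfy $\beta_2+\beta_3=d-\alpha-\gamma$ and $\beta_2\beta_3=\det(Q)/d$. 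A short computation simplifies the product to $\beta_2\beta_3=-(d_X\alpha+d_Y\gamma-\alpha\gamma)$, which is negative, so $\beta_2>0>\beta_3$ and $\beta_2=\tfrac12\bigl(T+\sqrt{T^2+4|P|}\bigr)$ with $T=d-\alpha-\gamma$ and $|P|=d_X\alpha+d_Y\gamma-\alpha\gamma$. Because the partition has three parts, interlacing yields $\lambda_2(G)\ge\beta_2$, so it suffices to prove $\beta_2\ge\frac{3d}{4}$.

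Writing $\sigma=\alpha+\gamma$ and squaring (legitimately, as $\tfrac{3d}{2}-T=\tfrac{d}{2}+\sigma>0$), the inequality $\beta_2\ge\frac{3d}{4}$ is equivalent to $|P|\ge\frac{3d(4\sigma-d)}{16}$. This is the step I expect to be the crux, and it is where the structure of a multigraph cut vertex enters: since $n\ge 3$, no component of $G-v$ can be a single vertex (such a vertex would spend all $d$ of its edges on $v$, forcing $n=2$), so $a,b\ge 2$. This gives $d_X\ge 2\alpha$ and $d_Y\ge 2\gamma$, whence $|P|=d_X\alpha+d_Y\gamma-\alpha\gamma\ge 2\alpha^2+2\gamma^2-\alpha\gamma$. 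Using $\alpha^2+\gamma^2=\sigma^2-2\alpha\gamma$ and $\alpha\gamma\le\sigma^2/4$ bounds this below by $\tfrac{3}{4}\sigma^2$, and finally
\[
\tfrac{3}{4}\sigma^2-\frac{3d(4\sigma-d)}{16}=\frac{3}{16}(2\sigma-d)^2\ge 0,
\]
which closes the argument.

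The delicate point is the feasibility observation $a,b\ge 2$: it is exactly what distinguishes the multigraph constant $\frac{3d}{4}$ from the weaker value that a naive continuous optimization (allowing $a=1$) would suggest, and it is the reason the $2$-vertex multigraph must be excluded. I also plan to verify carefully that interlacing applies to this (generally non-equitable) partition by using the quotient matrix of averaged row sums. Tracing equality back through $(2\sigma-d)^2=0$ together with $\alpha=\gamma$ gives $a=b=2$ and $d_X=d_Y=d/2$, which should supply the extremal family demonstrating that $\frac{3d}{4}$ cannot be improved.
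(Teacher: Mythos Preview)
Your argument is correct. Note, however, that the paper does not itself prove this theorem: it appears in the literature review as a result quoted from \cite{O- alg conn mult graphs}. That said, your approach is exactly the framework the paper uses to establish its own, sharper, $n$-dependent bounds in Theorems~\ref{theorem1} and~\ref{theo23}: isolate the cut vertex $v$, form the three-block partition $(\{v\},X,Y)$, write down the quotient matrix, and bound $\lambda_2(Q)$ from below via Corollary~\ref{cor}. The structural observation you flag as ``delicate''---that no component of $G-v$ can be a single vertex, hence $a,b\ge 2$---is precisely the constraint $s_1\ge 2$ invoked in the proof of Theorem~\ref{theo23}. The only difference lies in the final optimization: you use $d_X\ge 2\alpha$, $d_Y\ge 2\gamma$ together with $\alpha\gamma\le\sigma^2/4$ to obtain the clean $n$-free constant $\tfrac{3d}{4}$ by hand, whereas the paper retains the dependence on $n$ (via calculus and symbolic computation) to reach the stronger bound $\frac{8n-25}{9n-25}\,d$, which equals $\tfrac{3d}{4}$ at $n=5$ and exceeds it for $n>5$. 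Your equality trace ($a=b=2$, $d_X=d_Y=d/2$) recovers exactly the $n=5$ extremal family recorded in the Observation following Theorem~\ref{theo23}.
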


See \cite{abreu,kirkland,rad} and the bibliographies therein for other recent results on algebraic connectivity; see also \cite{dam1,dam2,dam3} for characterizations of the algebraic connectivities of specific families of graphs.


\subsubsection*{Main contributions}

The aim of the present paper is to investigate what upper bounds on the second-largest eigenvalues of regular simple graphs and multigraphs of a given order guarantee a
desired vertex-connectivity $\kappa(G)$ or edge-connectivity $\kappa'(G)$. In other words, we address the following question asked by Mohar (private communication with the fourth author) and alluded to in \cite{CioabaGu2016}:

\begin{quest}
\label{q1.11}
For a $d$-regular simple graph or multigraph $G$ of a given order and for $1 \leq t \leq d-1$,
what is the best upper bound for $\lambda_2(G)$ which guarantees that
$\kappa'(G) \geq t+1$ or that $\kappa(G) \geq t+1$?
\end{quest}

A starting point of our work, which also  motivated the above question, comes from Theorem \ref{O, bound 2 for multigraphs} \cite{O-Edge-conn from eigenvalues}, because despite the fact that the bound was shown to be tight, the tightness comes from the smallest multigraph. This suggests that this bound can be improved, and a natural next step is to look at the case where the number of vertices is fixed. The main results of this work are the following two spectral bounds which guarantee a certain vertex- and edge-connectivity for multigraphs of a given order. We also construct examples which show the bounds are tight.

\begin{thm}
\label{thm112}
Let $G$ be an $n$-vertex $d$-regular multigraph with $n\geq 5$ and $d\geq 3$. If $\lambda_2(G)<\frac{8n-25}{9n-25}d$, then $\kappa(G)\geq 2$.
\end{thm}

\begin{thm}
\label{thm113}
Let $G$ be an $n$-vertex $d$-regular multigraph with $\lambda_2(G)<\rho(d,n)$,
where $\rho(d,n)$ is the second-largest eigenvalue of a certain $4\times 4$ matrix (see Section \ref{section4}). Then $\kappa'(G)\geq 2$.
\end{thm}

Theorems \ref{thm112} and \ref{thm113} extend the results listed earlier to multigraphs, and improve some of them (e.g. Theorem \ref{Suilthm1.9}). The majority of the related results listed earlier were derived using a variety of combinatorial, linear algebraic, and analytic techniques; moreover, they
feature upper bounds for $\lambda_2(G)$ which do not depend on the order of the graph. In contrast, the results derived in the present paper feature bounds
for $\lambda_2(G)$ which depend on both the degree and the order of the graphs, and as such are tight for infinite families of graphs. Furthermore, the
derivations of these results combine analytic techniques with computer-aided \emph{symbolic} algebra; this proves to be a powerful approach, easily
establishing the desired results in all but finitely-many cases. The remaining cases are verified through a brute-force approach which relies on
enumerating all multigraphs with certain properties. In order to avoid enumeration and post-hoc elimination of the exponential number of multigraphs
without the desired properties, our approach required the development of novel combinatorial and graph theoretic techniques. While the problem of
generating all non-isomorphic simple graphs having a certain degree sequence and other properties is well-studied (cf. \cite{hakimi,hanlon,ruskey}), there
are not as many efficiently-implemented algorithms for constrained enumeration of multigraphs (see \cite{taqqu} for some results in this direction). Thus,
the developed enumeration procedure may also be of independent interest.

The paper is organized as follows. In the next section, we recall some graph theoretic and linear algebraic notions, specifically those related to eigenvalue interlacing. In Sections 3 and 4, we present our main results. We conclude with some final remarks in Section 5. The Appendix includes further details and computer code for symbolic computations used in some of the proofs.

We note that Theorems \ref{theorem1} and \ref{theorem4} are not our main results and are not tight, but we include them for completeness since they are general bounds that give a better intuition of the bigger picture. Also, note that the results for simple graphs discussed in this section are not comparable with our bounds for multigraphs in Theorems \ref{thm112} and \ref{thm113}. See for instance the upper bound on $\lambda_2$ in Theorem \ref{Cioba improvement for t=2} \cite{Cioaba 2010}, which for $t=d-1$ behaves approximately as $d$, and the upper bound $\lambda_2$ in Theorem \ref{O, bound 2 for multigraphs}
\cite{O-Edge-conn from eigenvalues}, which for $t=d-1$ behaves as a small constant. Hence, there is a large gap between the upper bounds on the second largest eigenvalue in simple graphs and the upper bounds for multigraphs, which suggests that there may well be room for improvement.

\section{Preliminaries}
\label{preliminaries}
In this paper, a \emph{multigraph} refers to a graph with multiple edges but no loops; a \emph{simple graph} refers to a graph with no multiple edges or
loops. The \emph{order} and \emph{size} of a multigraph $G$ are denoted by $n=|V(G)|$ and $m=|E(G)|$, respectively. A \emph{double edge} (respectively
\emph{triple edge}) in a multigraph is an edge of multiplicity two (respectively three). The \emph{degree} of a vertex $v$ of $G$, denoted $d_G(v)$, is the
number of edges incident to $v$. The \emph{degree sequence} of $G$ is a list $\{d_1,\ldots,d_n\}$ of the vertex degrees of $G$. We may abbreviate the
degree sequence of $G$ by only writing distinct degrees, with the number of vertices realizing each degree in superscript. For example, if $G$ is the star
graph on $n$ vertices, the degree sequence of $G$ may be written as $\{n-1, 1^{n-1}\}$.

A \emph{vertex cut} (respectively \emph{edge cut}) of $G$ is a set of vertices (respectively edges) which, when removed, increases the number of connected
components in $G$. A multigraph $G$ with more than $k$ vertices is said to be \emph{$k$-vertex-connected} if there is no vertex cut of size $k-1$. The
\emph{vertex-connectivity} of $G$, denoted $\kappa(G)$, is the maximum $k$ such that $G$ is $k$-vertex-connected. Similarly, $G$ is
\emph{$k$-edge-connected} if there is no edge cut of size $k-1$; the \emph{edge-connectivity} of $G$, denoted $\kappa'(G)$, is the maximum $k$ such that
$G$ is $k$-edge-connected. A \emph{cut-vertex} (respectively \emph{cut-edge}) is a vertex cut (respectively edge cut) of size one.

Given sets $V_1,V_2\subset V(G)$, $[V_1,V_2]$ denotes the number of edges with one endpoint in $V_1$ and the other in $V_2$. The \emph{induced subgraph}
$G[V_1]$ is the subgraph of $G$ whose vertex set is $V_1$ and whose edge set consists of all edges of $G$ which have both endpoints in $V_1$. A
\emph{matching} is a set of edges of $G$ which have no common endpoints; a $k$-matching is a matching containing $k$ edges. $G+e$ denotes the graph
$(V(G),E(G)\cup\{e\})$, and $G+E'$ denotes the graph $(V(G),E(G)\cup E')$. The \emph{complete graph} on $n$ vertices is denoted $K_n$. An \emph{odd path}
(respectively \emph{even path}) in a graph is a connected component which is a path with an odd (respectively even) number of vertices. For other graph
theoretic terminology and definitions, we refer the reader to \cite{west}.

The \emph{adjacency matrix} of $G$ will be denoted by $A(G)$; recall that in a multigraph, the entry $A_{i,j}$ is the number of edges between vertices
$v_i$ and $v_j$. The \emph{eigenvalues} of $G$ are the eigenvalues of its adjacency matrix, and are denoted by $\lambda_1(G) \geq \cdots \geq
\lambda_n(G)$. The \emph{Laplacian matrix} of $G$ is equal to $D(G) - A(G)$, where $D(G)$ is the diagonal matrix whose entry $D_{i,i}$ is the degree of
vertex $v_i$. The \emph{Laplacian eigenvalues} of $G$ are the eigenvalues of its Laplacian matrix and are denoted by $0=\mu_1(G) \leq \cdots \leq
\mu_n(G)$. The dependence of these parameters on $G$ may be omitted when it is clear from the context. Let $A$ be an $n \times n$ matrix; $B$ is a
\emph{principal submatrix} of $A$ if $B$ is a square matrix obtained by removing certain rows and columns of $A$.

A technical tool used in this paper is \emph{eigenvalue interlacing} (for more details see Section 2.5 of \cite{BH}). Given two sequences of real numbers
$a_1\geq\cdots\geq a_n$ and $b_1\geq \cdots\geq b_m$ with $m<n$, we say that the second sequence \emph{interlaces} the first sequence whenever $a_i\geq
b_i\geq a_{n-m+i}$ for $i=1,\ldots, m$.

\begin{thm} \textup{[Interlacing Theorem,\cite{BH}]}
If $A$ is a real symmetric $n \times n$ matrix and $B$ is a principal submatrix of $A$ of order $m \times m$ with $m<n$, then for $1 \le i \le m$,
$\lambda_i(A) \ge \lambda_i(B) \ge \lambda_{n-m+i}(A)$, i.e., the eigenvalues of $B$ interlace the eigenvalues of $A$.
\end{thm}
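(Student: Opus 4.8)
The plan is to derive both interlacing inequalities from the Courant--Fischer min--max characterization of the eigenvalues of a real symmetric matrix. Since permuting the rows and columns of $A$ by the same permutation leaves its eigenvalues unchanged, I may assume without loss of generality that $B$ is the leading $m\times m$ principal submatrix of $A$. Writing $P=\bigl[\begin{smallmatrix} I_m \\ 0 \end{smallmatrix}\bigr]$ for the $n\times m$ matrix whose columns are the first $m$ standard basis vectors of $\mathbb{R}^n$, we have $B=P^{\top}AP$ and $P^{\top}P=I_m$. Hence for every $y\in\mathbb{R}^m$ the vector $x=Py\in\mathbb{R}^n$ satisfies $x^{\top}Ax=y^{\top}By$ and $x^{\top}x=y^{\top}y$, so the Rayleigh quotient is preserved, $\frac{x^{\top}Ax}{x^{\top}x}=\frac{y^{\top}By}{y^{\top}y}$, and the map $y\mapsto Py$ embeds $\mathbb{R}^m$ isometrically (in particular injectively) into $\mathbb{R}^n$.

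For the upper bound $\lambda_i(A)\ge\lambda_i(B)$, recall the Courant--Fischer formula, which for an $N\times N$ symmetric matrix $M$ reads
\[
\lambda_i(M)=\max_{\dim S=i}\ \min_{0\neq v\in S}\frac{v^{\top}Mv}{v^{\top}v},
\]
the maximum ranging over $i$-dimensional subspaces. I would let $T\subseteq\mathbb{R}^m$ be an $i$-dimensional subspace attaining this maximum for $B$. Then $PT\subseteq\mathbb{R}^n$ is again $i$-dimensional by injectivity of $P$, and the Rayleigh-quotient identity above shows that the minimum of the quotient of $A$ over $PT$ equals $\lambda_i(B)$. Taking $S=PT$ as a feasible competitor in the outer maximum for $A$ yields $\lambda_i(A)\ge\lambda_i(B)$.

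For the lower bound $\lambda_i(B)\ge\lambda_{n-m+i}(A)$ I would use the dual Courant--Fischer formula
\[
\lambda_j(M)=\min_{\dim S=N-j+1}\ \max_{0\neq v\in S}\frac{v^{\top}Mv}{v^{\top}v}.
\]
Applying this to $A$ (so $N=n$) with $j=n-m+i$ makes the relevant subspaces have dimension $n-(n-m+i)+1=m-i+1$, which is precisely the dimension appearing in the same formula applied to $B$ (so $N=m$) at index $i$. Letting $T^{*}\subseteq\mathbb{R}^m$ be an $(m-i+1)$-dimensional subspace attaining the minimum for $B$, the subspace $PT^{*}$ is feasible in the outer minimum defining $\lambda_{n-m+i}(A)$, and the Rayleigh-quotient identity equates the inner maxima; this gives $\lambda_{n-m+i}(A)\le\lambda_i(B)$.

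I expect essentially all the mathematical content to rest on the Courant--Fischer theorem, so if that is taken as known the argument is short. The part demanding the most care is the index bookkeeping: because the eigenvalues are listed in decreasing order, one must verify that matching the dimension $m-i+1$ in the dual formula for $B$ against the dual formula for $A$ is exactly what produces the shifted index $n-m+i$. A fully self-contained alternative would be to first prove the single-deletion case $m=n-1$ (for instance via the interlacing of the roots of the characteristic polynomial of a bordered matrix) and then iterate, removing one row and its corresponding column at a time; each deletion shifts the lower index by one, and $m$ deletions accumulate precisely to the offset $n-m$.
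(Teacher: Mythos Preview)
Your proof is correct: the Courant--Fischer min--max argument you give is the standard route to Cauchy interlacing, and your index bookkeeping in the dual formula is right. However, the paper does not actually prove this theorem; it is stated with a citation to Brouwer and Haemers, \emph{Spectra of Graphs}, and used as a black box (specifically to justify Corollary~\ref{cor} on quotient-matrix interlacing). So there is no proof in the paper to compare against---you have supplied a valid proof where the authors simply invoked the literature.
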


Let $\mathcal{P}=\{V_1,\ldots,V_s\}$ be a partition of the vertex set of a multigraph $G$ into $s$ non-empty subsets. The \emph{quotient matrix} $Q$
corresponding to $\mathcal{P}$ is the $s\times s$ matrix whose entry $Q_{i,j}$ ($1\leq i,j\leq s$) is the average number of incident edges in $V_j$ of the
vertices in $V_i$. More precisely, $Q_{i,j}=\frac{[V_i,V_j]}{|V_i|}$ if $i \neq j$, and $Q_{i,i}=\frac{2|E(G[V_i])|}{|V_i|}$. Note that for a simple graph,
$Q_{i,j}$ is just the average number of neighbors between vertices in $V_j$ and vertices in $V_i$.

\begin{cor} \textup{[Corollary 2.5.4, \cite{BH}]}\label{cor}
The eigenvalues of any quotient matrix $Q$ interlace the eigenvalues of $G$.
\end{cor}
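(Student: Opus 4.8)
The plan is to realize the quotient matrix $Q$, which need not be symmetric, as a matrix that is \emph{similar} to a principal submatrix of a matrix orthogonally equivalent to $A(G)$, so that the Interlacing Theorem applies directly.

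First I would introduce the $n \times s$ \emph{characteristic matrix} $C$ of the partition $\mathcal{P}$, defined by $C_{v,j} = 1$ if $v \in V_j$ and $C_{v,j} = 0$ otherwise. Since every vertex lies in exactly one part, the columns of $C$ are mutually orthogonal and $C^{\top} C = \Delta$, where $\Delta = \operatorname{diag}(|V_1|, \ldots, |V_s|)$. A direct computation of $(C^{\top} A C)_{i,j} = \sum_{v \in V_i,\, w \in V_j} A_{v,w}$ gives $[V_i, V_j]$ when $i \neq j$ and $2|E(G[V_i])|$ when $i = j$ (each edge inside $V_i$ being counted twice in the double sum); dividing row $i$ by $|V_i|$ then recovers exactly the entries of $Q$. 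Hence $Q = \Delta^{-1} C^{\top} A C$.

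Next, to symmetrize, I would pass to the \emph{normalized} characteristic matrix $\widehat{C} = C \Delta^{-1/2}$, whose columns are orthonormal: $\widehat{C}^{\top}\widehat{C} = \Delta^{-1/2} C^{\top} C \Delta^{-1/2} = I_s$. Setting $B = \widehat{C}^{\top} A \widehat{C}$, the symmetry of $A$ makes $B$ symmetric, and a short calculation shows $B = \Delta^{1/2} Q \Delta^{-1/2}$, so that $B$ and $Q$ are similar and therefore share the same eigenvalues. Finally, since $\widehat{C}$ has orthonormal columns, I would extend it to an orthogonal $n \times n$ matrix $[\widehat{C} \mid T]$. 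Conjugating $A$ by this orthogonal matrix yields a matrix orthogonally similar to $A$ — hence cospectral with $G$ — whose leading $s \times s$ principal submatrix is precisely $B$. The Interlacing Theorem, applied to this symmetric matrix and its principal submatrix $B$, then gives that the eigenvalues of $B$ interlace those of $A(G)$; since $Q$ and $B$ are cospectral, the eigenvalues of $Q$ interlace the eigenvalues of $G$, as claimed.

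The main obstacle here is conceptual rather than computational: $Q$ is generally not symmetric, so the Interlacing Theorem cannot be invoked for $Q$ directly. The crux is the similarity $B = \Delta^{1/2} Q \Delta^{-1/2}$, which replaces $Q$ by a symmetric matrix with identical eigenvalues that genuinely sits as a principal submatrix inside an orthogonal conjugate of $A$. Some care is also needed in the multigraph setting to confirm that the diagonal entries $Q_{i,i} = 2|E(G[V_i])|/|V_i|$ are reproduced correctly by $\Delta^{-1} C^{\top} A C$, precisely because edges within a single part are double-counted in $C^{\top} A C$.
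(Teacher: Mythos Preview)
Your proof is correct and is exactly the standard argument (normalized characteristic matrix, symmetrization $B=\Delta^{1/2}Q\Delta^{-1/2}$, extension to an orthogonal basis, then Cauchy interlacing) that appears in Brouwer--Haemers. Note, however, that the paper does not supply its own proof of this statement: it is quoted verbatim as Corollary~2.5.4 of \cite{BH} and used as a black box, so there is nothing in the paper to compare your argument against beyond the citation itself.
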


\section{Bounds for $\lambda_2(G)$ to guarantee $\kappa(G)\ge t+1$}

\subsection{$\lambda_2(G)$ and $\kappa(G)\geq t+1$}

In this section, we establish an upper bound for the second-largest eigenvalue of an $n$-vertex $d$-regular simple graph or multigraph which guarantees a
certain vertex-connectivity. To our knowledge, this is the first spectral bound on the vertex-connectivity of a regular graph which depends on both the
degree and the order of the graph.

\begin{thm}
\label{theorem1}
Let $G$ be an $n$-vertex $d$-regular simple graph or multigraph, which is not obtained by duplicating edges in a complete graph on at most $t+1$ vertices;
let

\begin{equation*}\phi(d,t)= \begin{cases}
~~2~~&\text{ if } G \text{ is a multigraph and } t=1 \\
~~1~~&\text { if } G \text { is a multigraph and } t\ge 2 \\
d+1~~&\text{ if } G \text { is a simple graph and } t=1\\
d+1-t&\text{ if } G \text { is a simple graph and } t \ge 2,
\end{cases}
\end{equation*}
where $0 \le t \le d-1$. If $\lambda_2(G)< d-\frac {td}{2\phi(d,t)} - \frac{td}{2(n-\phi(d,t))}$, then $\kappa(G) \ge t+1$.
\end{thm}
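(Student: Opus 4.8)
The plan is to argue by contraposition: assuming $\kappa(G)\le t$, I would produce a small vertex cut and use eigenvalue interlacing against a quotient matrix to force $\lambda_2(G)\ge d-\frac{td}{2\phi(d,t)}-\frac{td}{2(n-\phi(d,t))}$, contradicting the hypothesis. Concretely, let $S$ be a minimum vertex cut, so $s:=|S|=\kappa(G)\le t$, and write $G-S$ as the disjoint union of its components grouped into two non-empty sides $X$ and $Y$ with $[X,Y]=0$; set $p=|X|$, $q=|Y|$, so $p+q+s=n$. Since $G$ is $d$-regular, the all-ones vector gives $\lambda_1=d$, and the quotient matrix of the partition $\{X,S,Y\}$ has $d$ as its largest eigenvalue with constant eigenvector, so by Corollary~\ref{cor} its second eigenvalue is a lower bound for $\lambda_2(G)$.

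The key computation I would carry out is the Rayleigh-quotient identity for a vector $f$ that is constant, equal to $\alpha,\beta,\gamma$ on $X,S,Y$ respectively: using regularity in the form $2|E(G[X])|=dp-[X,S]$ together with the analogous identities for $Y$ and $S$, one obtains
\begin{equation*}
f^{\top}A(G)\,f \;=\; d\,f^{\top}f \;-\; [X,S]\,(\alpha-\beta)^2 \;-\; [Y,S]\,(\beta-\gamma)^2 .
\end{equation*}
Taking $f\perp\mathbf 1$ (i.e. $p\alpha+s\beta+q\gamma=0$) and specializing $\beta=\gamma$ collapses this to the two-part partition $\{X,\,S\cup Y\}$, yielding $\lambda_2(G)\ge d-[X,S]\bigl(\tfrac1p+\tfrac1{n-p}\bigr)$; the choice $\beta=\alpha$ gives the symmetric bound in terms of $[Y,S]$ and $q$. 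Because $[X,S]+[Y,S]=ds-2|E(G[S])|\le ds\le dt$, at least one of the two cuts is at most $dt/2$, so after possibly swapping $X$ and $Y$ I may assume the chosen lone side carries cut at most $dt/2$, reducing the goal to the inequality $\tfrac1p+\tfrac1{n-p}\le\tfrac1{\phi}+\tfrac1{n-\phi}$.

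Since $x\mapsto\tfrac1x+\tfrac1{n-x}$ is convex and symmetric about $n/2$, this last inequality holds exactly when the lone side has size in $[\phi,n-\phi]$, so the whole argument reduces to a size estimate: the components of $G-S$ can be grouped into two sides, each of size at least $\phi(d,t)$, with the smaller of the two cuts assigned to a side of admissible size. This is precisely where the four cases of $\phi$ enter. In a simple graph every vertex of a component of $G-S$ keeps at least $d-s\ge d-t$ of its neighbours inside that component, which forces the components (and hence the grouped sides) to be large and gives $\phi=d+2$ when $t=1$ and $\phi=d+1$ when $t\ge2$; in a multigraph a component can consist of a single vertex joined to $S$ by parallel edges, so one can only bound the \emph{total} size of each side, obtaining $\phi=t+1$ (and $\phi=3$ when $t=1$).

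I expect the main obstacle to be exactly this grouping/size step and the degenerate configurations it must rule out. When $G-S$ contains a very small component—a small clique, or in the multigraph case a vertex attached to $S$ by multiple edges—the lone side carrying the smaller cut may fail to reach $\phi$, and the crude two-part bound above becomes too weak; there one must instead show that the complementary grouping, or the full three-part optimum of the displayed identity, still suffices, or else recognize the configuration as one of the excluded graphs obtained by duplicating edges in a complete graph on at most $t+1$ vertices. Resolving these boundary cases is where the case analysis (simple versus multigraph, $t=1$ versus $t\ge2$) together with the symbolic computation and finite enumeration advertised in the introduction are needed, and it is also where one identifies the extremal families attaining equality—two blobs of sizes $\phi$ and $n-\phi$ joined through a cut $S$ of size $t$ whose $dt$ incident edges are split evenly between the two sides.
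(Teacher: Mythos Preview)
Your overall strategy is essentially the paper's: assume $\kappa(G)\le t$, pick a vertex cut, form a two-block partition, and use interlacing to force $\lambda_2(G)\ge d-\frac{td}{2\phi}-\frac{td}{2(n-\phi)}$. The paper goes directly to the two-block quotient matrix of $\{S_1\cup C,\bar S\}$ (cut lumped with one side) rather than passing through a three-block Rayleigh quotient, but the resulting inequality is the same as the one you obtain after specializing $\beta=\gamma$.

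Two points are worth correcting. First, you anticipate that the ``grouping/size step'' will need the symbolic computation and finite enumeration advertised in the introduction. It does not: the paper's proof of this theorem is entirely elementary and fits in half a page. The computer-aided tools are used only for the sharper Theorems~3.2 and~4.2. The size step here is just: in the simple case each side of $G-C$ has at least $d+1-c$ vertices (with the extra ``$+1$'' when $t=1$ because a side of size exactly $d$ would force $G[S_1\cup C]=K_{d+1}$ and overload the cut vertex), so $|S_1\cup C|\ge\phi$; in the multigraph case the side can be a single vertex, so one first proves the bound with $c+1$ in place of $\phi$ and then checks by differentiation that $c\mapsto d-\frac{cd}{2(c+1)}-\frac{cd}{2(n-c-1)}$ is decreasing, yielding the stated bound at $c=t$. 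No three-block optimum, no enumeration.

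Second, be careful about what $\phi$ measures. The values $d+2$, $d+1$, $3$, $t+1$ are lower bounds for the block $S_1\cup C$ (side \emph{together with} the cut), not for the bare side $X$; your phrasing ``each of size at least $\phi(d,t)$'' for the grouped components of $G-S$ is off by $|S|$. Aligning with the paper's partition makes the four $\phi$ values fall out immediately from $s_1+c$, and the multigraph $t\ge2$ case needs the monotonicity-in-$c$ argument rather than a direct size bound.
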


\proof
Assume to the contrary that $\kappa(G)\leq t$. If $G$ is disconnected, then $\lambda_2(G)=d \ge d-\frac {td}{2\phi(d,t)} - \frac{td}{2(n-\phi(d,t))}$, a
contradiction.
Now, assume that $\kappa(G) \ge 1$.
Hence, there exists a vertex cut $C$ of $G$ with $1\leq c:=|C|\leq t$. Let $S_1$ be a union of some components of $G-C$ such that
$[S,\overline{S}]=[C,\overline{S}] \le \frac{cd}{2} \le \frac{td}{2}$, where $S=S_1 \cup C$ and $\bar{S}=V(G)\backslash S$. See Figure \ref{fig_ptd} for an
illustration of this partition.

\begin{figure}[ht!]
\begin{center}
\includegraphics[scale=0.35]{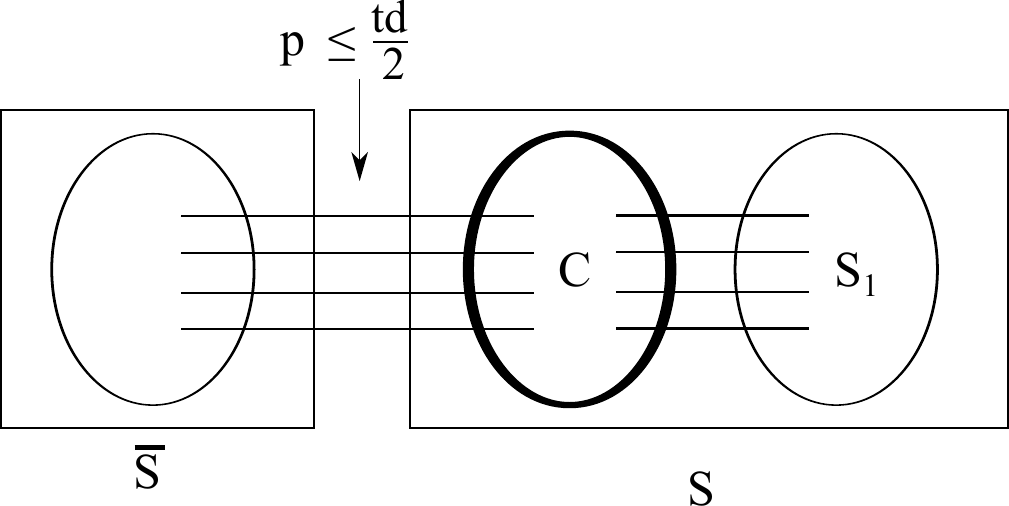}\qquad\qquad
\caption{Partition of $V(G)$ into $S$ and $\bar{S}$.}
\label{fig_ptd}
\end{center}
\end{figure}

\noindent Let $[S, \overline{S}]=p$, and $|S_1|=s_1$; then, we have $2[S,S]=d(s_1+c)-p$, and $2[\overline{S},\overline{S}]=d(n-s_1-c)-p$, so the quotient
matrix for the partition $\{S, \overline{S}\}$ is
\begin{equation*}
Q=\left(
\begin{array}{cc}
d-\frac{p}{s_1+c} & \frac{p}{s_1+c}\\
\frac{p}{n-s_1-c}& d-\frac{p}{n-s_1-c} \\
\end{array}
\right),
\end{equation*}
\noindent and the characteristic polynomial of $Q$ with respect to $x$ is $(x-d)(x-d+\frac{p}{s_1+c}+\frac{p}{n-s_1-c})$. Then
by Corollary~\ref{cor},
we have
\[\lambda_2(G) \ge d-\frac{p}{s_1+c}-\frac{p}{n-s_1-c}.\]

We now consider two cases based on whether $G$ is a simple graph or a multigraph.

\begin{description}
  {\it \item[Case 1:] $G$ is a simple graph}. If $t=1$, then $c=1$, and since the degree of each vertex in $S_1$ is $d$, it holds that $s_1 \ge d$. If $s_1 = d$, $G[S]$
  is a complete subgraph of $G$, so the vertex in $C$ has degree greater than $d$ because $p \ge 1$; this is a contradiction. Thus $s_1 \ge d+1$ and $p \le \frac{d}{2}$. Moreover, since $n\geq s_1+d+2$, it follows that $\frac{d+1}{s_1+1}\leq \frac{n-(s_1-1)}{n-(d+1)}$, and hence $\frac{1}{(s_1+1)(n-(s_1+1))}\leq \frac{1}{(d+1)(n-(d+1))}$. Using this inequality, we have
\begin{eqnarray*}
\lambda_2(G) &\geq& d-\frac{p}{s_1+c}-\frac{p}{n-s_1-c}\geq d-\frac{d}{2(s_1+1)}-\frac{d}{2(n-(s_1+1))}\\
&=&d-\frac{dn}{2}\frac{1}{(s_1+1)(n-(s_1+1))}\geq d-\frac{dn}{2}\frac{1}{(d+1)(n-(d+1))}\\
&=&d-\frac{d}{2(d+1)}-\frac{d}{2(n-d-1)},
\end{eqnarray*}
as desired. If $t \ge 2$, by the same argument as above, we must have
  $s_1 \ge d+1-c\geq d+1-t$, $n\geq d+1+s_1$, $p \le \frac{td}{2}$, and so $\lambda_2(G) \ge d-\frac{td}{2(d+1-t)}-\frac{td}{2(n-d-1+t)}$, as desired.

  {\it \item[Case 2:] $G$ is a multigraph}. If $t=1$, then $c=1$, $s_1 \ge 2$, $p \le d/2$. Moreover, since $n\geq s_1+3$, it follows that 
$\frac{2}{s_1+1}\leq \frac{n-(s_1+1)}{n-2}$ and hence $\frac{1}{(s_1+1)(n-(s_1+1))}\leq \frac{1}{2(n-2)}$. Using this inequality, we have
\begin{eqnarray*}
\lambda_2(G) &\geq& d-\frac{p}{s_1+c}-\frac{p}{n-s_1-c}\geq d-\frac{d}{2(s_1+1)}-\frac{d}{2(n-(s_1+1))}\\
&=&d-\frac{dn}{2}\frac{1}{(s_1+1)(n-(s_1+1))}\geq d-\frac{dn}{2}\frac{1}{2(n-2)}\\
&=&d-\frac{d}{4}-\frac{d}{2(n-2)},
\end{eqnarray*}
as desired. If $t \ge 2$, then $s_1 \ge 1$, $p \leq \frac{td}{2}$, $n\geq s_1+c+1$, and by a similar reasoning as above, $\lambda_2(G) \geq d- \frac{td}{2}-\frac{td}{2(n-1)}$, as desired. 
  \qed 
 
\end{description}

\subsection{Improved bound for $\lambda_2(G)$ to guarantee $\kappa(G)\geq 2$}

We now improve the result of Theorem \ref{theorem1} for the case when $G$ is a multigraph and $t=1$. Recall that in this case, Theorem \ref{theorem1} states
that if $\lambda_2(G)<d-\frac{d}{4}-\frac{d}{2(n-2)}=\frac{3n-8}{4n-8}d$, then $\kappa(G)\geq 2$. Moreover, in Observation \ref{theo32tight} it is shown that the following bound from Theorem \ref{theo23} is tight. As discussed in Section \ref{section_intro}, the bound of Theorem \ref{theo23} is incomparable with bounds on $\lambda_2(G)$ guaranteeing a certain vertex connectivity for simple graphs (e.g. Theorem~\ref{CioabaGu16}); however, it does improve the bound of Theorem \ref{Suilthm1.9} for multigraphs.

\begin{thm}\label{theo23}
Let $G$ be an $n$-vertex $d$-regular multigraph with $n\geq 5$ and $d\geq 3$. If $\lambda_2(G)<\frac{8n-25}{9n-25}d$, then $\kappa(G)\geq 2$.
\end{thm}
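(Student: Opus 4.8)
The plan is to follow the same quotient-matrix strategy used in Theorem~\ref{theorem1}, but to extract a sharper lower bound for $\lambda_2(G)$ by being more careful about the size $s_1$ of the smaller side when $t=1$. As before, I would assume for contradiction that $\kappa(G)\le 1$; the disconnected case gives $\lambda_2(G)=d$, which exceeds the stated bound, so I may assume $G$ has a cut-vertex. Let $C=\{v\}$ be a cut-vertex, let $S_1$ be a union of some components of $G-v$ chosen so that $[S,\overline S]=[C,\overline S]\le d/2$, set $S=S_1\cup C$ with $s_1=|S_1|$, and let $p=[S,\overline S]$. The quotient matrix for $\{S,\overline S\}$ has second eigenvalue $d-\frac{p}{s_1+1}-\frac{p}{n-s_1-1}$, so by Corollary~\ref{cor} we get
\[
\lambda_2(G)\ \ge\ d-\frac{p}{s_1+1}-\frac{p}{n-s_1-1}.
\]

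The improvement over Theorem~\ref{theorem1} must come from a better estimate in the regime that forced the weaker $\phi(d,1)=3$ bound, namely $s_1=2$. The key observation I would exploit is that when $s_1$ is very small, the cut vertex $v$ cannot send many edges across to $\overline S$ without forcing too many edges (or a too-large degree) inside $S$. Concretely, all edges leaving $S$ emanate from $v$, so $p\le d$; but $v$ also has edges into $S_1$, and each of the $s_1$ vertices of $S_1$ has all $d$ of its edges confined to $S=S_1\cup\{v\}$. Counting degrees inside $S$, the total degree $d(s_1+1)$ equals $2|E(G[S])|+p$, and I would use the fact that $G[S_1]$ can absorb at most $d\,s_1-[C,S_1]$ edge-endpoints to bound $p$ from above in terms of $s_1$. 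The aim is to show that the function $g(s_1)=\frac{p(s_1)}{s_1+1}+\frac{p(s_1)}{n-s_1-1}$ is maximized at the specific small value of $s_1$ that yields the claimed coefficient $\frac{8n-25}{9n-25}d$, i.e.\ the configuration producing $d-\frac{p}{s_1+1}-\frac{p}{n-s_1-1}=\frac{8n-25}{9n-25}d$. Matching the target, one expects the extremal case to be $s_1=2$ together with an improved bound $p\le d/2$ refined to something like the value forcing denominator $9n-25$; I would verify that $d-\frac{d/2}{3}-\frac{d/2}{n-3}$ is not the right expression and instead that the sharper count gives $\frac{p}{s_1+1}+\frac{p}{n-s_1-1}$ no larger than $\frac{d}{9}\cdot\frac{\text{(linear in }n)}{\,n-25/9\,}$, matching $\frac{8n-25}{9n-25}d$ after simplification.

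The main obstacle, and the step requiring real care, is the refined upper bound on $p$ for small $s_1$, since this is precisely where multigraph subtleties enter: with $s_1=1$, the single vertex of $S_1$ is joined to $v$ by multiple edges, so $p=[C,\overline S]$ can be as large as $d$ minus the multiplicity to that vertex, and one must check that this case does not violate the bound. I would therefore split into the subcases $s_1=1$ and $s_1\ge 2$, treat each with its own edge-count, and confirm that the minimum of the resulting lower bounds for $\lambda_2(G)$ over all admissible $(s_1,p)$ is attained at the asserted extremal configuration. To finish, I would check monotonicity of the relevant one-variable function in $s_1$ (differentiating as in Case~2 of Theorem~\ref{theorem1}) to reduce to the boundary small-$s_1$ values, then compare those finitely many candidates directly against $\frac{8n-25}{9n-25}d$; the hypotheses $n\ge 5$ and $d\ge 3$ are presumably exactly what is needed to rule out the degenerate complete-multigraph exceptions and to keep all denominators positive, so I would confirm they are used in exactly those places.
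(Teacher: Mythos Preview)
Your proposal has a genuine gap: the two-part partition $\{S,\overline{S}\}$ cannot deliver the bound $\frac{8n-25}{9n-25}d$, no matter how carefully you bound $p$. The issue is not the estimate on $p$ but the coarseness of the partition. In the extremal $5$-vertex multigraph given in the paper (the Observation following the theorem), one has $s_1=2$ and $p=m_2=d/2$ \emph{exactly}, so your bound reads
\[
\lambda_2(G)\ \ge\ d-\frac{d/2}{3}-\frac{d/2}{2}=\frac{7d}{12},
\]
whereas the true value is $\lambda_2(G)=\frac{3d}{4}=\frac{8\cdot 5-25}{9\cdot 5-25}\,d$. Thus the two-block quotient matrix undershoots even with the optimal value of $p$; your plan to ``refine the count on $p$'' cannot close this gap because there is nothing to refine --- the worst case really has $p=d/2$. (Incidentally, the subcase $s_1=1$ you propose to analyze does not occur: the lone vertex of $S_1$ would send all $d$ of its edges to $v$, forcing $m_2=0$ and contradicting that $v$ is a cut-vertex.)

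The paper's proof uses instead the \emph{three}-block partition $\{S_1,\{v\},S_2\}$, isolating the cut-vertex in its own block. The resulting $3\times 3$ quotient matrix depends on both $m_1$ and $m_2$ (not just their sum through $p=m_2$), and its second eigenvalue has an explicit closed form. Minimizing that expression analytically over the real parameter $m_2$ (with $m_1=d-m_2$) yields $\lambda_2(Q)\ge d-\frac{dn}{n-1+4s_1s_2}$; a further minimization over $s_1\in\{2,\dots,n-3\}$ then gives the claimed $\frac{8n-25}{9n-25}d$, attained at $s_1=2$. The extra block is precisely what captures the additional spectral information that the two-block partition throws away.
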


\begin{proof}
Assume to the contrary that $\kappa(G) \leq 1$.
If $\kappa(G)=0$, then $\lambda_2(G)=d >\frac{8n-25}{9n-25}d$, a contradiction. Thus, we can assume henceforth that $\kappa(G)=1$.

Let $v$ be a cut-vertex of $G$, and $S_1$ and $S_2$ be two components of $G-v$ with $|S_1|=s_1$ and $|S_2|=s_2=n-s_1-1$. Let $m_1=[v,S_1]$ and
$m_2=[v,S_2]$; without loss of generality, we can assume that $m_2\leq m_1$, and hence that $1\leq m_2\leq \frac{d}{2}$ (otherwise the roles of $S_1$ and
$S_2$ can be reversed); note that since $d\geq 3$, we must have $2\leq s_1\leq n-3$; moreover, $d=m_1+m_2$. See Figure \ref{fig2} for an illustration of
this partition in the case when $s_1=2$.

\begin{figure}[ht!]
\begin{center}
\includegraphics[scale=0.35]{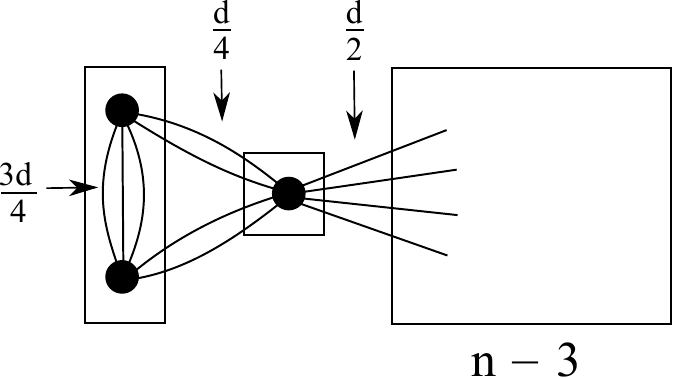}\qquad\qquad
\caption{Partition of $V(G)$ into $S_1$, $\{v\}$ and $S_2$, when $|S_1|=2$.}
\label{fig2}
\end{center}
\end{figure}

\noindent The quotient matrix for the partition $\{S_1, \{v\}, S_2\}$ is

\begin{equation*}
Q=\left( \begin{array}{ccc}
d- \frac{m_1}{s_1} & \frac{m_1}{s_1} & 0 \\
m_1 & 0 & m_2 \\
0 & \frac{m_2}{s_2} & d- \frac{m_2}{s_2}
\end{array} \right),
\end{equation*}
and its characteristic polynomial with respect to $x$ is
\begin{equation*}
(x-d)\left[x^2-
\left(d-\frac{m_1}{s_1}-\frac{m_2}{s_2}\right)x - \frac{m_1^2}{s_1} -
\frac{m_2^2}{s_2} + \frac{m_1 m_2}{s_1 s_2}\right].
\end{equation*}
Then by Corollary~\ref{cor}, we have $\lambda_2(G) \geq \lambda_2(Q)$, where $\lambda_2(Q)$ is the second-largest root of the characteristic polynomial of
$Q$; it can be verified that $\lambda_2(Q)$ can be expressed as follows:
\begin{equation}
\label{eq1}
\frac{1}{2}\left[
d-\frac{m_1}{s_1}-\frac{m_2}{s_2}+
\sqrt{\left(d-\frac{m_1}{s_1}-\frac{m_2}{s_2}\right)^2 +
4\left(\frac{m_1^2}{s_1} +
\frac{m_2^2}{s_2} - \frac{m_1 m_2}{s_1 s_2}\right)}\right].
\end{equation}
If we set the derivative of $\lambda_2(Q)$ with respect to $m_2$ equal to zero and solve for $m_2$, we obtain
\begin{equation}
\label{eq2}
m_2=\frac{d(s_2+2s_1s_2)}{n-1+4s_1s_2}.
\end{equation}
Substituting $d-m_2$ for $m_1$, and the right hand side of (\ref{eq2}) for $m_2$ in (\ref{eq1}), and simplifying, we obtain
\begin{equation*}
\lambda_2(G)\geq d-\frac{dn}{n-1+4s_1s_2}.
\end{equation*}
Finally, when we substitute $n-s_1-1$ for $s_2$, the resulting expression has a minimum at $s_1=2$, for $n\geq 5$, $d\geq 3$, and $2\leq s_1\leq n-3$, with
minimal value $\frac{8dn-25d}{9n-25}$. This minimization and some of the algebraic manipulations described above were carried out using symbolic
computation in Mathematica; for details, see the Appendix.
\end{proof}

\begin{obs}
\label{theo32tight}
Let $G$ be a multigraph with the following adjacency matrix:

\begin{equation*}
\left( \begin{array}{ccccc}
0&3d/4&d/4&0&0\\
3d/4&0&d/4&0&0\\
d/4&d/4&0&d/4&d/4\\
0&0&d/4&0&3d/4\\
0&0&d/4&3d/4&0\end{array} \right).
\end{equation*}
Then $\lambda_2(G)=\frac{8\cdot 5-25}{9\cdot 5-25}d$. Moreover, $G$ is a $d$-regular multigraph with 5 vertices, $d=4k$, $k\geq 1$, and $\kappa(G)=1$.
Thus, the bound in Theorem \ref{theo23} is the best possible for this infinite family of multigraphs.
\end{obs}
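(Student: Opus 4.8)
The plan is to verify the four claims of the observation in sequence: that the displayed matrix is that of a $d$-regular multigraph, that $\kappa(G)=1$, that $\lambda_2(G)=\tfrac{3d}{4}$, and finally that these facts certify the tightness of Theorem~\ref{theo23}. The first claim is immediate from inspection. Writing $d=4k$ makes each entry $3d/4=3k$ and $d/4=k$ a nonnegative integer, so the matrix is the adjacency matrix of a genuine multigraph (with a multi-edge of multiplicity $3k$ inside each of the pairs $\{v_1,v_2\}$ and $\{v_4,v_5\}$). Summing the entries of each row gives $3d/4+d/4=d$ for rows $1,2,4,5$ and $4\cdot d/4=d$ for row $3$, so $G$ is $d$-regular. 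For the connectivity claim I would note that $G$ is connected (every vertex is joined to $v_3$) but that deleting $v_3$ leaves the two components induced by $\{v_1,v_2\}$ and $\{v_4,v_5\}$; hence $v_3$ is a cut-vertex and $\kappa(G)=1<2$.

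The crux, and the step deserving the most care, is the spectral computation. Rather than expand a quintic characteristic polynomial, I would diagonalize $A(G)$ using the symmetry of the partition $\mathcal{P}=\{\{v_1,v_2\},\{v_3\},\{v_4,v_5\}\}$. One checks directly that $\mathcal{P}$ is equitable: within each pair the two vertices send the same number of edges to every cell (weight $3d/4$ inside the pair, weight $d/4$ to $v_3$, weight $0$ to the opposite pair). Consequently every eigenvector of the quotient matrix
\[
B=\frac{d}{4}\begin{pmatrix}3&1&0\\2&0&2\\0&1&3\end{pmatrix}
\]
lifts, by making it constant on each cell, to an eigenvector of $A(G)$ with the same eigenvalue. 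A short computation shows that $\tfrac{4}{d}B$ has characteristic polynomial $(x-4)(x-3)(x+1)$, so $B$ contributes the eigenvalues $d,\ \tfrac{3d}{4},\ -\tfrac{d}{4}$ to the spectrum of $G$. The remaining two eigenvectors lie in the orthogonal complement of the cell-indicator vectors, namely the space of vectors summing to zero on each cell; since the middle cell is a singleton this space is spanned by $e_1-e_2$ and $e_4-e_5$, and a direct check gives $A(G)(e_1-e_2)=-\tfrac{3d}{4}(e_1-e_2)$ and similarly for $e_4-e_5$. Thus the spectrum of $G$ is $\{d,\ \tfrac{3d}{4},\ -\tfrac{d}{4},\ -\tfrac{3d}{4},\ -\tfrac{3d}{4}\}$, whence $\lambda_2(G)=\tfrac{3d}{4}$, matching $\frac{8\cdot5-25}{9\cdot5-25}d$.

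Finally, I would assemble these facts to conclude tightness. The threshold in Theorem~\ref{theo23}, evaluated at $n=5$, is exactly $\frac{8\cdot5-25}{9\cdot5-25}d=\tfrac{3d}{4}=\lambda_2(G)$. Hence $G$ is a $5$-vertex $d$-regular multigraph that attains the bound with equality yet has $\kappa(G)=1$, which shows that the strict inequality in Theorem~\ref{theo23} cannot be weakened to a non-strict one; the bound is therefore best possible. Letting $k$ range over all positive integers (so $d=4k$) produces the claimed infinite family. The only subtle point in the whole argument is ensuring that the quotient-matrix eigenvalues are genuine eigenvalues of $G$ rather than mere interlacing bounds as in Corollary~\ref{cor}; this is exactly what equitability of $\mathcal{P}$ guarantees, and the singleton middle cell is what makes the two complementary eigenvectors easy to identify.
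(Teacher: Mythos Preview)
Your argument is correct in every detail: the regularity and integrality checks, the identification of $v_3$ as a cut-vertex, the equitable-partition computation of the spectrum (the quotient matrix $B$ and its characteristic polynomial $(x-4)(x-3)(x+1)$ are right, as are the two antisymmetric eigenvectors $e_1-e_2$ and $e_4-e_5$ with eigenvalue $-3d/4$), and the tightness conclusion.

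There is nothing to compare against, since the paper states this as an Observation and gives no proof at all, leaving the verification to the reader. Your equitable-partition approach is a clean way to carry out that verification, and your closing remark explaining why equitability (rather than mere interlacing as in Corollary~\ref{cor}) is needed is a nice touch that the paper does not supply.
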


\section{Bounds for $\lambda_2(G)$ to guarantee $\kappa'(G)\ge t+1$}
\label{section4}

In this section, we first give an upper bound for $\lambda_2(G)$ in an $n$-vertex $d$-regular multigraph which guarantees that $\kappa'(G) \ge t+1$; its
proof is omitted, since it is similar to that of Theorem \ref{theorem1}. Theorem \ref{theorem4} extends a result of  Cioab\u{a}~\cite{Cioaba 2010} to
multigraphs.

\begin{thm}
\label{theorem4}
Let $G$ be an $n$-vertex $d$-regular multigraph, which is not obtained by duplicating edges in a complete graph on at most $t+1$ vertices. Let

\begin{equation*}\psi(d,t)= \begin{cases}
3\quad\text{ if } t=1 \\
2\quad\text { if } t\ge 2, \\
\end{cases}
\end{equation*}
where $0 \le t \le d-1$. If $\lambda_2(G)< d-\frac {t}{\psi(d,t)} - \frac{t}{n-\psi(d,t)}$, then $\kappa'(G) \ge t+1$.
\end{thm}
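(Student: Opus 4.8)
The plan is to mirror the structure of the proof of Theorem~\ref{theorem1}, replacing the vertex-cut argument with an edge-cut argument. Assume to the contrary that $\kappa'(G)\le t$. If $G$ is disconnected, then $\lambda_2(G)=d$, which exceeds the stated bound, a contradiction; so we may assume $G$ is connected with an edge cut of size $c$ for some $1\le c\le t$. This edge cut partitions $V(G)$ into two nonempty sets $S$ and $\overline{S}=V(G)\setminus S$ with $[S,\overline{S}]=p=c\le t$. First I would write down the quotient matrix $Q$ for the partition $\{S,\overline{S}\}$: with $|S|=s$, the diagonal entries are $d-\frac{p}{s}$ and $d-\frac{p}{n-s}$, and the off-diagonal entries are $\frac{p}{s}$ and $\frac{p}{n-s}$. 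Exactly as in Theorem~\ref{theorem1}, the characteristic polynomial factors as $(x-d)\bigl(x-d+\frac{p}{s}+\frac{p}{n-s}\bigr)$, so Corollary~\ref{cor} yields
\begin{equation*}
\lambda_2(G)\ \ge\ d-\frac{p}{s}-\frac{p}{n-s}.
\end{equation*}

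The next step is to bound $s$ from below so as to bound the right-hand side from below, then optimize over the cut size. For the case $t\ge 2$, the natural lower bound is $s\ge 1$ together with $p\le c$, giving $\lambda_2(G)\ge d-\frac{c}{s}-\frac{c}{n-s}$; taking $s\ge 1$ and $n-s\ge 1$ is too crude, so I would instead argue that the smaller side has at least $\psi(d,t)=2$ vertices (a single vertex joined to the rest by $c\le t\le d-1<d$ edges cannot be $d$-regular in a multigraph unless it forms a complete-graph duplication, which is excluded by hypothesis), giving $s\ge 2$ and hence $\lambda_2(G)\ge d-\frac{c}{2}-\frac{c}{n-2}$. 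I would then treat $f(c)=d-\frac{c}{2}-\frac{c}{n-2}$ as a function of $c$ and observe it is decreasing in $c$, so the minimum over $1\le c\le t$ occurs at $c=t$, yielding $\lambda_2(G)\ge d-\frac{t}{2}-\frac{t}{n-2}$, which matches the $t\ge2$ branch with $\psi=2$.

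For the case $t=1$, the edge cut is a single cut-edge ($c=p=1$), and here one can push the side-size bound to $s\ge \psi(d,1)=3$. The argument is that if the smaller side had only two vertices joined to $\overline{S}$ by one edge, $d$-regularity would force those two vertices to carry $d$ edges among a pair, again forcing a complete-graph duplication, which is excluded; so $s\ge 3$, giving $\lambda_2(G)\ge d-\frac{1}{3}-\frac{1}{n-3}$, matching the $t=1$ branch. I expect the main obstacle to be making these side-size lower bounds fully rigorous, i.e.\ carefully justifying why the excluded ``duplicated complete graph'' cases are exactly the configurations in which $s$ could be smaller than $\psi(d,t)$, and confirming that the monotonicity argument in $c$ (rather than the two-variable monotonicity in $(c,s)$ used in Theorem~\ref{theorem1}) still drives the optimum to the claimed endpoint. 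Once the side-size bound and the monotonicity of $f(c)$ are established, the remaining algebra is routine and the proof closes by contradiction with the hypothesized bound on $\lambda_2(G)$.
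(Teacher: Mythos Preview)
Your approach is correct and matches the paper's intended argument (the paper omits the proof of Theorem~\ref{theorem4} precisely because it mirrors Theorem~\ref{theorem1}). The quotient-matrix step and the monotonicity in $c$ are fine.

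The one place your write-up wobbles is the justification of $s\ge\psi(d,t)$: you attribute it to the ``duplicated complete graph'' exclusion, but that hypothesis is not what does the work. The correct reasons are pure degree counts. For $t\ge 2$: if $|S|=1$ then the lone vertex has degree $[S,\overline S]=c\le t\le d-1<d$, contradicting $d$-regularity, so $|S|\ge 2$. For $t=1$: summing degrees in $S$ gives $d|S|=2|E(G[S])|+1$, so $d|S|$ is odd, forcing $d$ odd and $|S|$ odd; since $|S|=1$ would give degree $1<d$, we get $|S|\ge 3$ (and by symmetry $|\overline S|\ge 3$, so $n\ge 6$). The exclusion of duplicated complete graphs on at most $t+1$ vertices is there only to rule out degenerate small-$n$ instances where the bound $d-\tfrac{t}{\psi}-\tfrac{t}{n-\psi}$ is undefined or vacuous, not to force $s\ge\psi$. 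Once you replace your justification with these two observations, the proof is complete and essentially identical to the paper's.
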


\noindent Now, we will improve the bound in Theorem~\ref{theorem4} for the case of $t=1$; see Observation \ref{obs_improvement} for an explanation of why
Theorem \ref{theo31} is an improvement. In Observation \ref{theo4.2tight}, it is shown that the bound in Theorem~\ref{theo31} is tight.

\begin{thm}\label{theo31}
Let $G$ be an $n$-vertex $d$-regular multigraph with $\lambda_2(G)<\rho(d,n)$,
where $\rho(d,n)$ is the second-largest eigenvalue of the following matrix:
\begin{equation*}\label{Qalsoforthm41}
Q=\left( \begin{array}{cccc}
\frac{d+1}{2} & \frac{d-1}{2} & 0 & 0 \\
d-1 & 0 & 1 & 0 \\
0 & 1 & 0 & d-1 \\
0 & 0 & \frac{d-1}{n-4} & d-\frac{d-1}{n-4} \end{array} \right).
\end{equation*}
Then $\kappa'(G)\geq 2$.
\end{thm}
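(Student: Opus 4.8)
The plan is to argue by contradiction, following the template of Theorems~\ref{theorem1} and~\ref{theo23}: assume $\kappa'(G)\le 1$ and produce a vertex partition whose quotient matrix is precisely the matrix $Q$ of the statement (up to the sizes of two of its cells), so that Corollary~\ref{cor} forces $\lambda_2(G)\ge\rho(d,n)$. First I would dispose of the disconnected case: if $G$ is disconnected then $\lambda_2(G)=d$, and since every row of $Q$ sums to $d$ the all-ones vector shows $\lambda_1(Q)=d$; as the underlying weighted ``path'' on the four cells is connected, $d$ is a simple eigenvalue and $\rho(d,n)=\lambda_2(Q)<d$, contradicting $\lambda_2(G)<\rho(d,n)$. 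Hence I may assume $\kappa'(G)=1$ and fix a cut-edge $e=uv$, letting $A$ and $B$ be the components of $G-e$ containing $u$ and $v$ respectively.

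The key structural observation is that $u$'s only non-cut edges lie inside $A$ and all land in $A':=A\setminus\{u\}$, so $[\{u\},A']=d-1$; symmetrically $[\{v\},B']=d-1$ with $B':=B\setminus\{v\}$. I would then take the partition $\mathcal{P}=\{A',\{u\},\{v\},B'\}$ and compute its quotient matrix, obtaining exactly the matrix of the statement but with the cell sizes $a:=|A'|$ and $b:=|B'|$ (so $a+b=n-2$) in place of $2$ and $n-4$; write this as $Q(a,b)$. Indeed $Q_{11}=d-\frac{d-1}{a}$, $Q_{12}=\frac{d-1}{a}$, $Q_{23}=Q_{32}=1$, $Q_{43}=\frac{d-1}{b}$, $Q_{44}=d-\frac{d-1}{b}$, and none of these depend on the internal structure of $A'$ or $B'$. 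A short degree count rules out $a=1$: a single vertex of $A'$ adjacent only to $u$ would need $d$ parallel edges to $u$, forcing $d_G(u)=d+1$; symmetrically $b\ge 2$, so $a\in\{2,\dots,n-4\}$ (and $n\ge 6$, with no cut-edge possible for smaller $n$). By Corollary~\ref{cor}, $\lambda_2(G)\ge\lambda_2(Q(a,b))$.

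It then remains to prove $\lambda_2(Q(a,b))\ge\rho(d,n)$ for every admissible $a$, i.e.\ that the second-largest eigenvalue of $Q(a,n-2-a)$ is minimized at the most unbalanced split $a=2$ (equivalently $a=n-4$). Factoring out the trivial eigenvalue $d$ leaves a cubic $c(x,a)$ whose largest root equals $\lambda_2(Q(a,b))$; since swapping the two ends of $\mathcal{P}$ leaves $Q$ unchanged, this root is symmetric about $a=\frac{n-2}{2}$. I would treat $a$ as a continuous variable on $[2,n-4]$, differentiate $c(x,a)=0$ implicitly to obtain $\frac{\partial\lambda_2}{\partial a}=-\frac{\partial c/\partial a}{\partial c/\partial x}$ evaluated at $x=\lambda_2(Q(a,b))$, and show this has constant sign on each half of the interval, so that the minimum is attained at the symmetric endpoints, both equal to $\rho(d,n)$; this is why $Q$ is displayed with the extreme cell sizes $2$ and $n-4$. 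This monotonicity step is the main obstacle: the relevant root of the cubic has no clean closed form, so the whole argument hinges on a symbolic verification (as in the proof of Theorem~\ref{theo23} and the Appendix). I would also note that using $a=2$ as the reference is legitimate even when it is itself infeasible for the realized graph (for instance when $d$ is even, since $2\lvert E(G[A'])\rvert=d+1$ must be an integer), because the inequality $\lambda_2(Q(a,b))\ge\lambda_2(Q(2,n-4))$ is established for the entire continuous range of $a$ and hence for every feasible split.
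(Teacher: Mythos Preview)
Your setup matches the paper's exactly: contradiction, dispose of the disconnected case, take a cut-edge $e=v_1v_2$, form the four-cell partition $\{S_1,\{v_1\},\{v_2\},S_2\}$, and observe that its quotient matrix is $Q(s_1,s_2)$ with $Q(2,n-4)$ being the matrix $Q$ of the statement. From that point on, however, your route and the paper's diverge sharply.

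The paper does \emph{not} attempt to prove the monotonicity $\lambda_2(Q(a,n-2-a))\ge\lambda_2(Q(2,n-4))$ for all $a\in[2,n-4]$. Instead it first records the parity consequence $d(s_i+1)$ odd, so $d$ is odd and $s_1\in\{2,4,6,\dots\}$, and then splits into three cases. For $s_1=2$ the quotient matrix is $Q$ itself. For $s_1=4$ it compares $\lambda_2(Q')=\lambda_2(Q(4,n-6))$ with $\rho(d,n)$ by exhibiting, case by case in $(d,n)$, a separating value $x$ with $\det(xI-Q')>0$ and $\det(xI-Q)<0$; for $(d,n)=(3,10)$ and $(3,12)$ no such certificate is produced, and the paper instead enumerates all admissible $3$-regular multigraphs and checks $\lambda_2(G)\ge\rho(3,n)$ directly. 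For $s_1\ge 6$ the paper abandons the four-cell partition altogether and uses the coarser two-cell partition $\{S_1\cup\{v_1\},S_2\cup\{v_2\}\}$, obtaining $\lambda_2(G)\ge d-\tfrac{1}{7}-\tfrac{1}{n-7}$; it then shows this exceeds $\rho(d,n)$ symbolically, again with an enumeration fallback for $d=3$, $n\in\{14,16,18\}$.

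What each approach buys: your unified four-cell argument is conceptually cleaner and, if the monotonicity holds, would dispense with the multigraph enumeration entirely (indeed, for $d=3$, $n=10$ one computes $\lambda_2(Q(4,4))=(3+\sqrt{65})/4\approx 2.77>\rho(3,10)\approx 2.715$, and similarly $\lambda_2(Q(6,6))=(5+\sqrt{145})/6\approx 2.84>\rho(3,14)$, so the claim looks true in exactly the cases the paper handles by brute force). The paper's approach is uglier but complete: every inequality is either reduced to a polynomial sign condition that Mathematica verifies, or to a finite check on explicitly listed multigraphs.

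The gap in your proposal is that the monotonicity step is only asserted, not established. Implicit differentiation gives $\partial\lambda_2/\partial a=-(\partial c/\partial a)/(\partial c/\partial x)$, and at the largest root of the cubic $c$ the denominator is positive; but showing $\partial c/\partial a\le 0$ \emph{at $x=\lambda_2$}, uniformly in the two free parameters $d$ and $n$, is exactly where the difficulty lies, since $\lambda_2$ has no closed form. You would need either a resultant/Sturm-sequence argument eliminating $\lambda_2$ via $c(\lambda_2,a)=0$, or sharp two-sided bounds on $\lambda_2$ that pin down the sign of $\partial c/\partial a$ --- neither of which you have carried out. The paper's authors evidently found it easier to sidestep this by the case split and enumeration. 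Your plan is reasonable and may well succeed, but as written it is a sketch whose hardest step is precisely the one left to ``symbolic verification.''
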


\proof
Assume to the contrary that $\kappa'(G)\leq 1$. If $\kappa'(G)=0$, then since the largest eigenvalue of $Q$ equals $d$, we have that
$\lambda_2(G)=d=\lambda_1(Q)\geq \lambda_2(Q)=\rho(d,n)$, a contradiction.

Now, assume that $\kappa'(G)=1$.
For any graph $H$, define $sc(H)$ to be the number of vertices in the smallest connected component of $H$.
Let $e=v_1v_2$ be a cut-edge of $G$ such that $sc(G-e)=\min\{sc(G-f):f \text{ is a cut-edge of }G\}$. In other words, $e$ is a cut-edge such that one of
the components of $G-e$ has minimum size among all subgraphs of $G$ which can be separated by removing a cut-edge of $G$. Let $G_1$ and $G_2$ be the two
components of $G-e$, where $v_1\in G_1$, $v_2\in G_2$, and $|V(G_1)|\leq |V(G_2)|$. For $i\in \{1,2\}$, let $S_i=V(G_i)\backslash\{v_i\}$ and $s_i=|S_i|$.
By the degree-sum formula, $ds_i+(d-1)=\sum_{v\in V(G_i)}d_{G_i}(v)=2|E(G_i)|$, whence it follows that $d(s_i+1)$ is odd. Thus, both $d$ and $s_i+1$ are
odd, and hence $n$ is even; moreover, $s_i\ge 2$, and hence $n\geq 6$. See Figure~\ref{partition} for an illustration.

\begin{figure}[ht!]
\begin{center}
\includegraphics[scale=0.35]{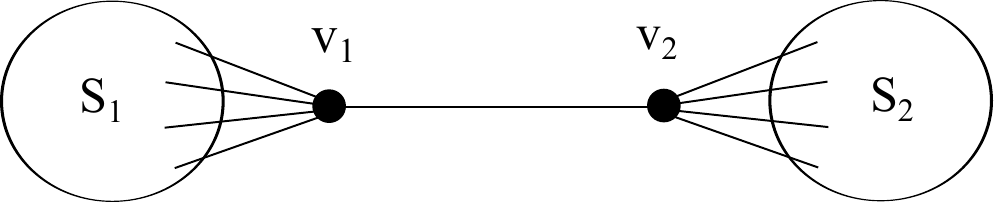}\qquad\qquad
\caption{A $d$-regular multigraph with $\kappa'(G)=1$.}
\label{partition}
\end{center}
\end{figure}

\noindent We now consider three cases based on the cardinality of $s_1+1$.

\begin{description}

  \item[Case 1: $s_1+1=3$.]

In this case, the structure of the graph is determined uniquely, and the vertex partition $\{S_1, \{v_1\},\{v_2\}, S_2\}$ corresponds to the quotient
matrix $Q$ defined in the statement of the Theorem; see Figure \ref{case_n3} for an illustration. Therefore, the inequality $\lambda_2(G)\geq\rho(d,n)$
holds for all $d$ and $n$.

\begin{figure}[ht!]
\label{case_n3}
\begin{center}
\includegraphics[scale=0.35]{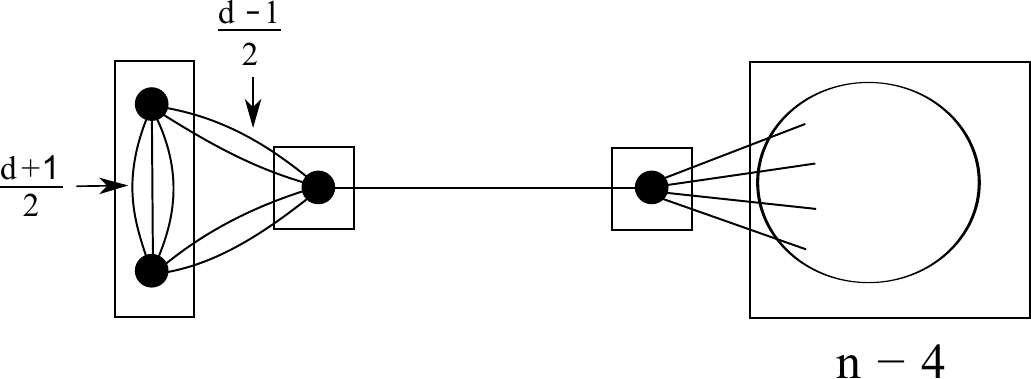}\qquad\qquad
\caption{A $d$-regular multigraph with $\kappa'(G)=1$ and $s_1=2$.}
\end{center}
\end{figure}

\item[Case 2: $s_1+1=5$.]

Consider the partition $\{S_1, \{v_1\},\{v_2\}, S_2\}$ and the corresponding quotient matrix:

\begin{equation*}
Q'=\left( \begin{array}{cccc}
d-\frac{d-1}{4} & \frac{d-1}{4} & 0 & 0 \\
d-1 & 0 & 1 & 0 \\
0 & 1 & 0 & d-1 \\
0 & 0 & \frac{d-1}{n-6} & d-\frac{d-1}{n-6} \end{array} \right).
\end{equation*}

Let $\rho'(d,n)=\lambda_2(Q')$. By Corollary \ref{cor}, $\lambda_2(G) \ge \lambda_2(Q')=\rho'(d,n)$. Note that $d$ is odd, and that due to the partition
structure, $n\geq 10$. Thus, to show that $\lambda_2(G)\geq \rho(d,n)$ holds for all $d$ and $n$, we will show that $\lambda_2(G)\geq \rho(d,n)$ holds
when $d=3$ and $n\in \{10,12\}$, and that

\begin{equation}
\label{ineq_rho_pp}
\rho'(d,n) \geq \rho(d,n)
\end{equation}

holds for all other values of $d$ and $n$. To verify that $\lambda_2(G)\geq \rho(d,n)$ holds when $d=3$ and $n\in \{10,12\}$, we compute the
second-largest eigenvalues of all possible multigraphs which have these parameters, and compare them to $\rho(3,10)$ and $\rho(3,12)$, respectively; the
enumeration procedure is described in the Appendix. For all other values of $d$ and $n$, we verify (\ref{ineq_rho_pp}) by separating it into the
following cases and using symbolic computation in Mathematica; see the Appendix for details. See also Case 3 below for a more detailed explanation of why
this computation is sufficient to establish the claim.

\begin{enumerate}
\item[a)] $d=3$, $n\geq 14$. Fix $d=3$ and $x=\frac{1689}{600}$. Then, $\text{det}(xI-Q')>0$ and $\text{det}(xI-Q)<0$ hold for all $n\geq 14$.
\item[b)] $d=5$, $n\in \{10,12\}$. Fix $d=5$ and $x=\frac{47}{10}$. Then, $\text{det}(xI-Q')>0$ and $\text{det}(xI-Q)<0$ hold for $n=10$ and $n=12$.
\item[c)] $d=7$, $n=10$. Fix $d=7$ and $x=\frac{333}{50}$. Then, $\text{det}(xI-Q')>0$ and $\text{det}(xI-Q)<0$ hold for $n=10$.
\item[d)] $d=5$, $n\geq 14$; $d=7$, $n\geq 12$; $d\geq 9$, $n\geq 10$. Fix $x=d-\frac{1}{5}-\frac{1}{n-5}$. Then, $\text{det}(xI-Q')>0$ and
    $\text{det}(xI-Q)<0$ hold for all values of $d$ and $n$ described in this case.
\end{enumerate}

\item[Case 3: $s_1+1\ge 7$.]
In this case, we consider the vertex partition of $G$ with the sets $S_1\cup\{v_1\}$ and $S_2\cup\{v_2\}$; see Figure \ref{case_n7} for an illustration.

\begin{figure}[ht!]
\begin{center}
\includegraphics[scale=0.35]{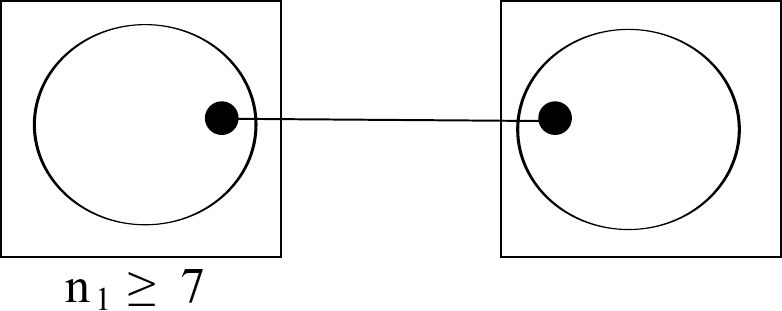}\qquad\qquad
\caption{Partition of $V(G)$ into $S_1\cup\{v_1\}$ and $S_2\cup\{v_2\}$.}
\label{case_n7}
\end{center}
\end{figure}

The second-largest eigenvalue of the quotient matrix $Q''$ corresponding to this vertex partition is equal to $d-\frac 1{s_1+1} - \frac 1{s_2+1}$.
By Corollary \ref{cor},
$\lambda_2(G) \ge \lambda_2(Q'') =  d - \frac 1{s_1+1} - \frac 1{s_2+1} \ge d - \frac 1{7} - \frac 1{n-7}$, where the last inequality follows from the
fact that $s_2+1\geq s_1+1\geq 7$. Note that $n$ is even, $d$ is odd, $d\geq 3$, and due to the partition structure, $n\geq 14$. Thus, to show that
$\lambda_2(G)\geq \rho(d,n)$ holds for all $d$ and $n$, we will show that $\lambda_2(G)\geq \rho(d,n)$ holds when $d=3$ and $n\in \{14,16,18\}$, and
that

\begin{equation}
\label{ineq_rho_7}
d - \frac 1{7} - \frac 1{n-7} \geq \rho(d,n)
\end{equation}

holds for all other values of $d$ and $n$. To verify that $\lambda_2(G)\geq \rho(d,n)$ holds when $d=3$ and $n\in \{14,16,18\}$, we compute the
second-largest eigenvalues of all possible multigraphs which have these parameters, and compare them to $\rho(3,14)$, $\rho(3,16)$, and $\rho(3,16)$,
respectively; the enumeration procedure is described in the Appendix. For all other values of $d$ and $n$, we verify (\ref{ineq_rho_7}) as follows.

Note that $\det(xI-Q)$ is a monic polynomial of degree 4, with roots $\lambda_1(Q)$, $\lambda_2(Q)$, $\lambda_3(Q)$, and $\lambda_4(Q)$; all roots are
real, since they interlace the eigenvalues of $G$. Moreover, $\lambda_1(Q)+\lambda_2(Q)+\lambda_3(Q)+
\lambda_4(Q)=\text{trace}(Q)=d+\frac{d+1}{2}-\frac{d-1}{n-4}$ and $\lambda_1(Q)=d$, which implies that $\lambda_2(Q)+\lambda_3(Q)+
\lambda_4(Q)=\frac{d+1}{2}-\frac{d-1}{n-4}$. By Theorem \ref{theorem4}, $\lambda_2(Q)\geq d-\frac{1}{3}-\frac{1}{n-3}$; thus,
$\lambda_3(Q)+\lambda_4(Q)<0$ for all $d\geq 3$ and $n\geq 14$. Since $\lambda_4(Q)\leq \lambda_3(Q)$, it follows that $\lambda_4(Q)<0$. Finally, note
that
\begin{equation*}
\lambda_4(Q)<0<d-\frac{1}{3}-\frac{1}{n-3}<d-\frac{1}{7}-\frac{1}{n-7}<d=\lambda_1(Q).
\end{equation*}
Thus, showing that (\ref{ineq_rho_7}) holds is equivalent to showing that
\begin{itemize}
\item[a)] $\text{det}(xI-Q)>0$ for $x=d-\frac{1}{3}-\frac{1}{n-3}$, and
\item[b)] $\text{det}(xI-Q)<0$ for $x=d-\frac{1}{7}-\frac{1}{n-7}$,
\end{itemize}
whence it follows that $\lambda_3(Q)\leq d-\frac{1}{3}-\frac{1}{n-3}\leq \lambda_2(Q)\leq d-\frac{1}{7}-\frac{1}{n-7}\leq \lambda_1(Q)$. Using symbolic
computation in Mathematica, we can verify that a) holds for all $d\geq 3$ and $n\geq 14$, and b) holds when $d=3$ and $n\geq 20$, and when $d\geq 5$ and
$n\geq 14$; for details, see the Appendix. Since the case $d=3$, $n\in\{14,16,18\}$ was verified by enumeration, this completes the proof. \qed
\end{description}

\begin{obs}
\label{obs_improvement}
When $t=1$, Theorem \ref{theorem4} states that if $\lambda_2(G)<d-\frac{1}{3}-\frac{1}{n-3}$, then $\kappa'(G)\geq 2$. Case 3 of the proof of Theorem
\ref{theo31} guarantees that $\rho(d,n)>d-1/3-1/(n-3)$, which means that $\rho(d,n)$ is a better bound than $d-1/3-1/(n-3)$.
\end{obs}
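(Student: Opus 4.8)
The plan is to read the observation off directly from the root structure of $\det(xI-Q)$ that was already established in Case 3 of the proof of Theorem \ref{theo31}. Since $\rho(d,n)=\lambda_2(Q)$ by definition, it suffices to prove the strict inequality $\lambda_2(Q)>d-\frac13-\frac1{n-3}$, where $Q$ is the matrix in the statement of Theorem \ref{theo31}.

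First I would recall the facts about $Q$ recorded in Case 3: $\det(xI-Q)$ is a monic degree-four polynomial whose roots $\lambda_1(Q)\ge\lambda_2(Q)\ge\lambda_3(Q)\ge\lambda_4(Q)$ are all real (they interlace the spectrum of $G$ by Corollary \ref{cor}), with $\lambda_1(Q)=d$ and $\lambda_4(Q)<0$. Setting $x_0:=d-\frac13-\frac1{n-3}$, I would note that $0<x_0<d$ throughout the relevant range, since for $d\ge 3$ and $n\ge 6$ one has $\frac13+\frac1{n-3}\le\frac23<d$. Consequently $x_0-\lambda_1(Q)<0$ and $x_0-\lambda_4(Q)>0$.

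The one substantive input is the strict positivity $\det(x_0 I-Q)>0$; this is precisely item a) of the symbolic verification carried out in Case 3 (established there for $d\ge 3$ and $n\ge 14$, and extending to the finitely many remaining small even orders by the same one-parameter computation). Factoring $\det(x_0I-Q)=\prod_{i=1}^4\bigl(x_0-\lambda_i(Q)\bigr)$, the two outer factors contribute a negative sign, so positivity of the product forces $\bigl(x_0-\lambda_2(Q)\bigr)\bigl(x_0-\lambda_3(Q)\bigr)<0$, placing $x_0$ strictly between $\lambda_3(Q)$ and $\lambda_2(Q)$. In particular $\lambda_2(Q)>x_0$, i.e.\ $\rho(d,n)>d-\frac13-\frac1{n-3}$, which is exactly the threshold of Theorem \ref{theorem4} at $t=1$; thus the hypothesis $\lambda_2(G)<\rho(d,n)$ of Theorem \ref{theo31} is weaker, and $\rho(d,n)$ is the larger, hence better, bound.

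The only point needing attention — and it is the reason the conclusion is strict rather than merely $\ge$ — is that I must invoke the \emph{strict} positivity of $\det(x_0 I-Q)$ from Case 3 a) rather than just nonnegativity. Strict positivity guarantees that $x_0$ is not a root of $\det(xI-Q)$, so the chain $\lambda_3(Q)\le x_0\le\lambda_2(Q)$ produced by the sign analysis sharpens to $\lambda_3(Q)<x_0<\lambda_2(Q)$, yielding the desired $\rho(d,n)>d-\frac13-\frac1{n-3}$.
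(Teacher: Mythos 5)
Your proposal is correct and follows essentially the same route as the paper: the observation is justified exactly by Case 3 of the proof of Theorem \ref{theo31}, namely item a) giving $\det(x_0I-Q)>0$ at $x_0=d-\frac{1}{3}-\frac{1}{n-3}$, combined with the already-established facts $\lambda_1(Q)=d$ and $\lambda_4(Q)<0$, so the sign of the factored quartic forces $\lambda_3(Q)<x_0<\lambda_2(Q)=\rho(d,n)$. Your explicit remark that strict positivity of the determinant is what upgrades the interlacing chain to a strict inequality is a faithful (and slightly more careful) rendering of what the paper leaves implicit.
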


\begin{obs}\label{theo4.2tight}
Let $G$ be the $d$-regular multigraph on 6 vertices with $d\geq 3$ and $\kappa'(G)=1$.
Then $\lambda_2(G)=\frac{1}{4}(d-1+\sqrt{9d^2-10d+17})=\rho(d,6)$, where $\rho(d,n)$ is defined as in the statement of Theorem \ref{theo31}. Thus, the
bound in Theorem \ref{theo31} is the best possible for this infinite family of multigraphs.
\end{obs}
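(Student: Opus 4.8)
The plan is to determine $G$ explicitly, compute its entire spectrum by exploiting the symmetry of $G$, and then read off $\lambda_2(G)$ while simultaneously identifying it with $\rho(d,6)=\lambda_2(Q)$.

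First I would fix the structure of $G$. Since $n=6$ and $\kappa'(G)=1$, there is a cut-edge $e=v_1v_2$ whose removal leaves components $G_1\ni v_1$ and $G_2\ni v_2$; by the degree-sum parity argument in the proof of Theorem~\ref{theo31}, $d$ is odd and each $G_i$ has odd order, so both components have order $3$ and $s_1=s_2=2$. This is exactly Case~1 of that proof, in which the structure is forced: writing $S_i=\{x_i,y_i\}$, the degree equations inside the $3$-vertex component $G_i$ have the unique solution in which $v_i$ is joined to each of $x_i$ and $y_i$ by $\tfrac{d-1}{2}$ edges, while $x_i$ is joined to $y_i$ by $\tfrac{d+1}{2}$ edges. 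Hence $G$ is unique for each odd $d\ge3$, and the (equitable) partition $\{S_1,\{v_1\},\{v_2\},S_2\}$ has quotient matrix equal to the matrix $Q$ of Theorem~\ref{theo31} specialized to $n=6$, where $\tfrac{d-1}{n-4}=\tfrac{d-1}{2}$.

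Next I would compute all six eigenvalues of $A(G)$ by splitting $\mathbb{R}^6$ into two $A$-invariant pieces. The orthogonal complement of the partition space is spanned by the two ``difference'' vectors supported on $S_1$ and on $S_2$; a direct check shows each is an eigenvector with eigenvalue $-\tfrac{d+1}{2}$. On the complementary $4$-dimensional partition space, $A(G)$ acts as $Q$, and I would diagonalize $Q$ using the reflection symmetry $1\leftrightarrow4,\ 2\leftrightarrow3$ (the automorphism of $G$ that swaps the two sides of the cut-edge). This decomposes $Q$ into a symmetric block $\left(\begin{smallmatrix}\frac{d+1}{2}&\frac{d-1}{2}\\ d-1&1\end{smallmatrix}\right)$, with eigenvalues $d$ and $\tfrac{3-d}{2}$, and an antisymmetric block $\left(\begin{smallmatrix}\frac{d+1}{2}&\frac{d-1}{2}\\ d-1&-1\end{smallmatrix}\right)$, with eigenvalues $\tfrac14\bigl(d-1\pm\sqrt{9d^2-10d+17}\bigr)$. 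The six eigenvalues of $G$ are therefore $d$, $\tfrac14(d-1+\sqrt{9d^2-10d+17})$, $\tfrac{3-d}{2}$, $\tfrac14(d-1-\sqrt{9d^2-10d+17})$, and $-\tfrac{d+1}{2}$ with multiplicity two.

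Finally I would order these. For $d\ge3$ the quantities $\tfrac{3-d}{2}$, $\tfrac14(d-1-\sqrt{9d^2-10d+17})$, and $-\tfrac{d+1}{2}$ are all nonpositive, whereas $\tfrac14(d-1+\sqrt{9d^2-10d+17})$ is positive and (since this requires only $d>1$) strictly below $d$; hence it is the second-largest eigenvalue, giving $\lambda_2(G)=\tfrac14(d-1+\sqrt{9d^2-10d+17})$. Being the larger root of the antisymmetric block while the remaining quotient eigenvalues are $d$ and $\tfrac{3-d}{2}\le0$, the same value is the second-largest eigenvalue of $Q$, i.e.\ $\rho(d,6)$, so both claimed equalities hold. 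The only step needing genuine care, rather than routine algebra, is that Corollary~\ref{cor} by itself yields only $\lambda_2(G)\ge\rho(d,6)$; securing the reverse inequality — and hence equality — is what forces the full spectral computation, and the reflection-symmetry decomposition is precisely the device that makes this computation clean.
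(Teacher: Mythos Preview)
Your argument is correct. The paper states this observation without proof, so there is no proof in the paper to compare against; your approach is the natural completion and dovetails with material already in the paper. Case~1 of the proof of Theorem~\ref{theo31} already records that for $s_1+1=3$ the structure of $G$ is forced and the partition $\{S_1,\{v_1\},\{v_2\},S_2\}$ gives exactly the matrix $Q$ with $n=6$; since this partition is in fact equitable, the eigenvalues of $Q$ are genuine eigenvalues of $A(G)$, and together with your two $-\tfrac{d+1}{2}$ eigenvectors on the orthogonal complement this accounts for the full spectrum. Your reflection-symmetry decomposition of $Q$ into two $2\times 2$ blocks is a clean way to extract the closed form $\tfrac14(d-1+\sqrt{9d^2-10d+17})$ and to verify that it is simultaneously $\lambda_2(G)$ and $\lambda_2(Q)=\rho(d,6)$; the same expression also appears (without derivation) in Theorem~\ref{O, bound 1 for multigraphs}.
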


\begin{obs}\label{theo4.2rho}
The function $\rho(d,n)$ in Theorem \ref{theo31} behaves like $d-\frac{1}{3}-\frac{1}{n-3}$ as $d$ and $n$ increase.
\end{obs}

\section{Conclusion}

In this paper, we presented two tight upper bounds (Theorems \ref{theo23} and \ref{theo31}) for the second-largest eigenvalues of regular graphs and multigraphs of a given order, which guarantee a desired vertex- or edge-connectivity. The given bounds extend known results for simple graphs, and improve previous results for multigraphs (Theorem \ref{Suilthm1.9} in \cite{O- alg conn mult graphs}). It was also shown that both bounds hold with equality for infinite families of graphs. In deriving these bounds, we used computer-aided symbolic algebra, which
synergizes well with the technique of eigenvalue interlacing; this combination gives a viable approach to investigating spectral bounds guaranteeing graph
theoretic properties, which differs from the typical analytic strategies used in similar results.


In future work, we will aim to extend Theorems \ref{theo23} and \ref{theo31} for all values of $t$. Another problem of interest is to obtain bounds on the second-largest eigenvalues of a graph which guarantee a desired connectivity, and depend on other graph invariants such as girth or circuit rank.

\section*{Acknowledgements}

The authors would like to thank Sebastian Cioab\u{a}, Matthew McGinnis, and an anonymous referee for their helpful suggestions. The authors gratefully acknowledge financial support for this research from the following grants and organizations: NSF-DMS Grants 1604458, 1604773,
1604697 and 1603823 (all authors), The Combinatorics Foundation (A. Abiad, S. O), NSF 1450681 (B. Brimkov), Institute of Mathematics and its Applications
(X. Mart\'{i}nez-Rivera), NRF-2017R1D1A1B03031758 (S. O).


\section*{Appendix}\label{appendix}

Below we provide the Mathematica code used to calculate the minimum of the second root of the characteristic polynomial in the proof of
Theorem~\ref{theo23}, and to check some cases in the proof of Theorem \ref{theo31}. The version of Mathematica used is 10.0.0.0 for 64-bit Microsoft
Windows. We also include additional details about the procedure of enumerating certain multigraphs in the proof of Theorem \ref{theo31}.

\subsection*{Theorem \ref{theo23}: symbolic reductions}
\footnotesize
\begin{onehalfspace}
\noindent\(\pmb{\text{secondroot}=(1/2)(d-\text{m1}/\text{s1}-\text{m2}/\text{s2}+((d-\text{m1}/\text{s1}-\text{m2}/\text{s2}){}^{\wedge}2}\\
\pmb{\hspace{55pt}+4(\text{m1}{}^{\wedge}2/\text{s1}+\text{m2}{}^{\wedge}2/\text{s2}-(\text{m1} \text{m2})/(\text{s1} \text{s2}))){}^{\wedge}(1/2));}\\
\pmb{\text{s2}=n-1-\text{s1};}\\
\pmb{\text{m1}=d-\text{m2};}\\
\pmb{\text{Reduce}[\text{D}[\text{secondroot},\text{m2}]==0\&\&2\leq \text{s1}\leq n-3\&\&\text{m2}>0\&\&d\geq 3,\text{m2}]}\)
\end{onehalfspace}
\begin{doublespace}

\noindent\(\text{s1}\geq 2\&\&n\geq 3+\text{s1}\&\&d\geq 3\&\&\text{m2}==\frac{-d+d n-3 d \text{s1}+2 d n \text{s1}-2 d \text{s1}^2}{-1+n-4 \text{s1}+4
n \text{s1}-4 \text{s1}^2}\)

\noindent\(\pmb{\text{m2}=\frac{-d+d n-3 d \text{s1}+2 d n \text{s1}-2 d \text{s1}^2}{-1+n-4 \text{s1}+4 n \text{s1}-4 \text{s1}^2};}\\
\pmb{\text{FullSimplify}[\text{secondroot}\&\&d\geq 3]}\)

\noindent\(d-\frac{d n}{n+4 n \text{s1}-(1+2 \text{s1})^2}\&\&d\geq 3\)

\noindent\(\pmb{\text{Minimize}\left[\left\{d-\frac{d n}{n+4 n \text{s1}-(1+2 \text{s1})^2},n\geq 5,d\geq 3,2\leq \text{s1}\leq
n-3\right\},\text{s1}\right]}\)

\(\begin{array}{lcr}
\hspace{-20pt}
\begin{cases}
 \frac{-25 d+8 d n}{-25+9 n} & d\geq 3\&\&n\geq 5 \\
 \infty  & \text{True} \\
\end{cases}\,,
&
s1\to\begin{cases}
 \text{Indeterminate} & !(d\geq 3\&\&n\geq 5) \\
 \frac{1}{2} \left(-1-\sqrt{(-5+n)^2}+n\right) & \text{True} \\
\end{cases}
\end{array}
\)
\end{doublespace}

\normalsize
\noindent Note that since $n\geq 5$, the argmin of $s_1$ in the last output is equal to 2.

\subsection*{Theorem \ref{theo31}: symbolic reductions}
\footnotesize

\begin{onehalfspace}

\noindent\(\pmb{Q=\{\{(d+1)/2,(d-1)/2,0,0\},\{d-1,0,1,0\},\{0,1,0,d-1\},}\\
\pmb{\hspace{22pt}\{0,0,(d-1)/(n-4),d-(d-1)/(n-4)\}\};}\\
\pmb{\text{Qprim}=\{\{d-(d-1)/4,(d-1)/4,0,0\},\{d-1,0,1,0\},}\\
\pmb{\hspace{43pt}\{0,1,0,d-1\},\{0,0,(d-1)/(n-6),d-(d-1)/(n-6)\}\};}\\
\pmb{\text{polyQ}=\text{CharacteristicPolynomial}[Q,x];}\\
\pmb{\text{polyQprim}=\text{CharacteristicPolynomial}[\text{Qprim},x];}\)

\normalsize
\noindent Case 2a: note that both inequalities hold for $d=3$ and $n\geq 14$.\\
\footnotesize
\noindent\(\pmb{d=3;x=1689/600;}\\
\pmb{\text{Reduce}[\text{polyQprim}>0\&\&n\geq 10,\text{Integers}]}\\
\pmb{\text{Reduce}[\text{polyQ}<0\&\&n\geq 10,\text{Integers}]}\)

\noindent\(n\in \text{Integers}\&\&n\geq 13\)

\noindent\(n\in \text{Integers}\&\&n\geq 10\)

\normalsize
\noindent Case 2b: note that both inequalities hold for $d=5$ and $n\in \{10,12\}$.\\
\footnotesize
\noindent\(\pmb{d=5;x=47/10;}\\
\pmb{\text{Reduce}[\text{polyQprim}>0\&\&n\geq 10,\text{Integers}]}\\
\pmb{\text{Reduce}[\text{polyQ}<0\&\&n\geq 10,\text{Integers}]}\)

\noindent\(n\in \text{Integers}\&\&n\geq 10\)

\noindent\(n==10\|n==11\|n==12\|n==13\|n==14\|n==15\|n==16\|n==17\)

\normalsize
\noindent Case 2c: note that both inequalities hold for $d=7$ and $n=10$.\\
\footnotesize
\noindent\(\pmb{d=7;x=\frac{333}{50};}\\
\pmb{\text{Reduce}[\text{polyQprim}>0\&\&n\geq 10,\text{Integers}]}\\
\pmb{\text{Reduce}[\text{polyQ}<0\&\&n\geq 10,\text{Integers}]}\)

\noindent\(n\in \text{Integers}\&\&n\geq 10\)

\noindent\(n==10\|n==11\|n==12\|n==13\|n==14\|n==15\)

\normalsize
\noindent Case 2d: note that both inequalities hold for $d=5$ and $n\geq 14$, $d=7$ and $n\geq 12$, and $d\geq 9$ and $n\geq 10$.\\
\footnotesize
\noindent\(\pmb{\text{Clear}[d];x=d-1/5-1/(n-5);}\\
\pmb{\text{Reduce}[\text{polyQ}<0\&\&n\geq 10\&\&d\geq 3,\text{Integers}]}\\
\pmb{\text{Reduce}[\text{polyQprim}>0\&\&n\geq 10\&\&d\geq 3,\text{Integers}]}\)

\noindent\((d|n)\in \text{Integers}\&\&((d==4\&\&n\geq 21)\|(d==5\&\&n\geq 14)\|(d==6\&\&n\geq 12)\|\\(d==7\&\&n\geq 11)\|(d\geq 8\&\&n\geq 10))\)

\noindent\((d|n)\in \text{Integers}\&\&n\geq 10\&\&d\geq 3\)

\normalsize
\noindent Case 3: note that both inequalities hold for $d=3$ and $n\geq 20$, and $d\geq 5$ and $n\geq 14$. The case $d=3$, $n\in\{14,16,18\}$ is verified
by enumeration in the next section.\\
\footnotesize
\noindent\(\pmb{x=d-1/7-1/(n-7);}\\
\pmb{\text{Reduce}[\text{polyQ}<0\&\&n\geq 14\&\&d\geq 3,\text{Integers}]}\\
\pmb{\text{Clear}[x];x=d-1/3-1/(n-3);}\\
\pmb{\text{Reduce}[\text{polyQ}>0\&\&n\geq 14\&\&d\geq 3,\text{Integers}]}\)

\noindent\((d|n)\in \text{Integers}\&\&((d==3\&\&n\geq 19)\|(d\geq 4\&\&n\geq 14))\)

\noindent\((d|n)\in \text{Integers}\&\&n\geq 14\&\&d\geq 3\)

\end{onehalfspace}

\normalsize
\subsection*{Theorem \ref{theo31}: enumerating multigraphs}
Let $A_{10}$ and $A_{12}$ respectively be the sets of 3-regular multigraphs of order $10$ and $12$ with edge-connectivity 1, such that the removal of any
cut-edge of these graphs produces components of order at least 5. Let $A_{14}$, $A_{16}$, and $A_{18}$ respectively be the sets of 3-regular multigraphs of
order $14$, $16$, and $18$ with edge-connectivity 1, such that the removal of any cut-edge of these graphs produces components of order at least 7. These
constraints imply that a graph in $A_{10}$ or $A_{14}$ must have exactly one cut-edge, a graph in $A_{12}$ or $A_{16}$ can have one or two cut-edges, and a
graph in $A_{18}$ can have one, two, or three cut-edges.

For $i\in\{5,7,9,11\}$, let $B_i$ be the set of all connected multigraphs which have degree sequence $\{3^{i-1},2\}$ and have no cut-edges. For any graph
$H\in B_i$, $i\in\{5,7,9,11\}$, define $v_2(H)$ to be the degree 2 vertex of $H$. Let $J_2$ be the graph consisting of two vertices joined by a double
edge, let $J_4$ be the graph obtained by joining two copies of $J_2$ by one edge, and let $J_4'$ be a complete graph on four vertices with one edge
removed. For $J\in\{J_2,J_4,J_4'\}$, define $v_2(J)$ to be one of the degree 2 vertices of $J$, and $v_2'(J)$ to be the other degree 2 vertex of $J$.
For any $i,j\in\{5,7,9,11\}$, define $B_i\leftrightharpoons B_j$ to be the set $\{H\dot\cup H'+\{v_2(H),v_2(H')\}:H\in B_i,H'\in B_j\}$ (where $\dot\cup$
denotes disjoint union). For any $i,j\in\{5,7,9,11\}$ and $J\in\{J_2,J_4,J_4'\}$, define $B_i\leftrightharpoons J\leftrightharpoons B_j$ to be the set
$\{H\dot\cup H'\dot\cup J+\{\{v_2(H),v_2(J)\},\{v_2(H'),v_2'(J)\}\}:H\in B_i,H'\in B_j\}$. In other words, ``$\leftrightharpoons$'' denotes the set
obtained by joining all possible pairs of graphs from the indicated families by a cut-edge incident to their degree 2 vertices. With this in mind, it is
easy to see that
\begin{eqnarray*}
A_{10}&=&B_5\leftrightharpoons B_5\\
A_{12}&=&(B_5\leftrightharpoons B_7)\cup (B_5\leftrightharpoons J_2\leftrightharpoons B_5)\\
A_{14}&=&B_7\leftrightharpoons B_7\\
A_{16}&=&(B_7\leftrightharpoons B_9)\cup (B_7\leftrightharpoons J_2\leftrightharpoons B_7)\\
A_{18}&=&(B_7\leftrightharpoons B_{11})\cup (B_9\leftrightharpoons B_9)\cup (B_7\leftrightharpoons J_2\leftrightharpoons B_9)\cup \\
&&(B_7\leftrightharpoons J_4\leftrightharpoons B_7)\cup (B_7\leftrightharpoons J_4'\leftrightharpoons B_7).
\end{eqnarray*}

\noindent See Figure \ref{enumeration_struct} for an illustration of these constructions.

\begin{figure}[ht!]
\begin{center}
\includegraphics[scale=0.45]{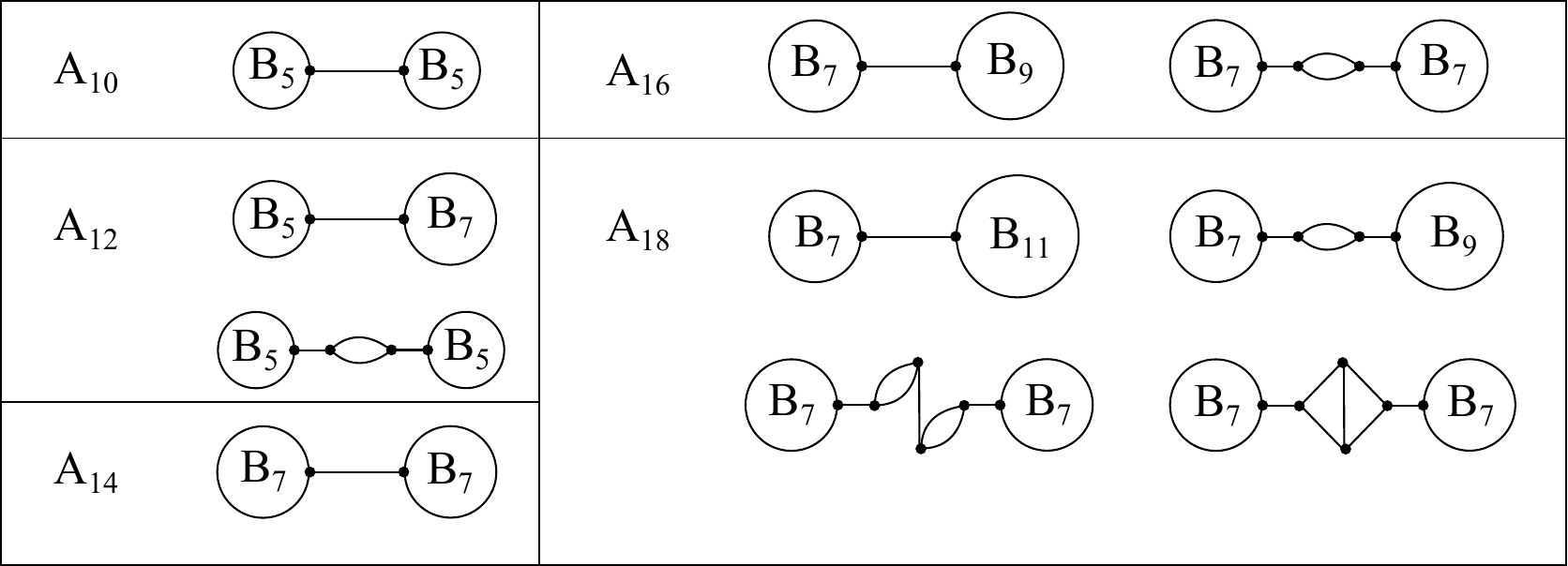}\qquad\qquad
\caption{All possible 2-vertex-connected component and cut-edge structures of graphs in $A_i,i\in\{10,12,14,16,18\}$.}
\label{enumeration_struct}
\end{center}
\end{figure}

Thus, to find the graphs in $A_i$, $i\in\{10,12,14,16,18\}$, it suffices to find the graphs in $B_j$, $j\in\{5,7,9,11\}$. Since the graphs in $B_j$ are
3-regular and connected, they cannot have triple edges; moreover, they can have at most $\frac{j-1}{2}$ double edges. Let $M(\ell,j)$ be the set of
multigraphs in $B_j$ which have $\ell$ double edges. Then, $B_j=M(0,j)\cup\cdots\cup M(\frac{j-1}{2},j)$. We will now describe a procedure for enumerating
the graphs in $M(\ell,j)$.

If the double edges of the graphs in $M(\ell,j)$ are replaced by single edges, the resulting graphs will be simple, 2-vertex-connected, and have degree
sequence $\{3^{j-2\ell-1},2^{2\ell+1}\}$. There are well-known algorithms for generating all nonisomorphic simple graphs with a given degree sequence (cf.
\cite{hakimi,hanlon,ruskey}); a practical algorithm is implemented in the software system SageMath. Let $S(\ell,j)$ be the set of nonisomorphic simple
graphs with degree sequence $\{3^{j-2\ell-1},2^{2\ell+1}\}$. Then, by adding double edges in all feasible ways to the simple graphs in $S(\ell,j)$, we can
recover the multigraphs in $M(\ell,j)$. Specifically, a double edge can be added to a graph in $S(\ell,j)$ only where a single edge with two degree 2
endpoints already exists. Moreover, not every graph in $S(\ell,j)$ can have $\ell$ double edges added to it in a way that the resulting multigraph is in
$M(\ell,j)$; similarly, it may be possible to add $\ell$ double edges to a graph in $S(\ell,j)$ in multiple ways so that the resulting multigraphs are in
$M(\ell,j)$.

Let $H$ be a graph in $S(\ell,j)$ and let $f(H)$ be the subgraph induced by the degree 2 vertices of $H$. Since the maximum degree of $f(H)$ is 2, $f(H)$
is the disjoint union of some paths and cycles. However, if $f(H)$ contains a cycle with less than $j$ vertices, a multigraph in $M(\ell,j)$ cannot be
obtained by doubling single edges of $H$ with two degree 2 endpoints (since any resulting multigraph with degree sequence $\{3^{j-1},2\}$ will be
disconnected). Similarly, if $f(H)$ contains more than one odd path, a multigraph in $M(\ell,j)$ cannot be obtained by doubling single edges of $H$ with
two degree 2 endpoints (since any resulting multigraph with degree sequence $\{3^{j-1},2\}$ will not have $\ell$ multiple edges).

Thus, let $S'(\ell,j)=\{H\in S(\ell,j):f(H)$ is either a cycle $C_j$, or contains exactly one odd path$\}$. For any graph $H$ in $S'(\ell,j)$, the
different maximum matchings (i.e. $\ell$-matchings) of $f(H)$ correspond to different ways to add double edges to $H$. Let $F(H)$ be the set of multigraphs
obtained by adding double edges to $H$ corresponding to the different $\ell$-matchings of $f(H)$. Then, $M(\ell,j)=\bigcup_{H\in S'(\ell,j)}F(H)$,
$B_j=\bigcup_{\ell=0}^{(j-1)/2} M(\ell,j)$, and $A_i$ can be obtained by joining pairs of graphs in $B_j$ as described earlier. Note that the set of
distinct maximum matchings of a graph whose components are paths, one of which is odd, can be found in linear time. In particular, in the even paths, there is a single way to maximally match up the edges; in the odd path of length $p$, there are $(p+1)/2$ different ways to match up the edges (and some of them may lead to isomorphic graphs, which can be tested for or ignored).

See Figure \ref{enumeration2} for an illustration of this enumeration for $M(2,7)$; the other sets of multigraphs $M(\ell,j)$ are handled analogously, and
combined to obtain the graphs in $A_i$. Finally, for each multigraph in $A_i$, we can easily compute and compare the second-largest eigenvalue to
$\rho(3,i)$; we have found that all of these eigenvalues are greater than or equal to $\rho(3,i)$, as desired.

\begin{figure}[ht!]
\begin{center}
\includegraphics[scale=0.2]{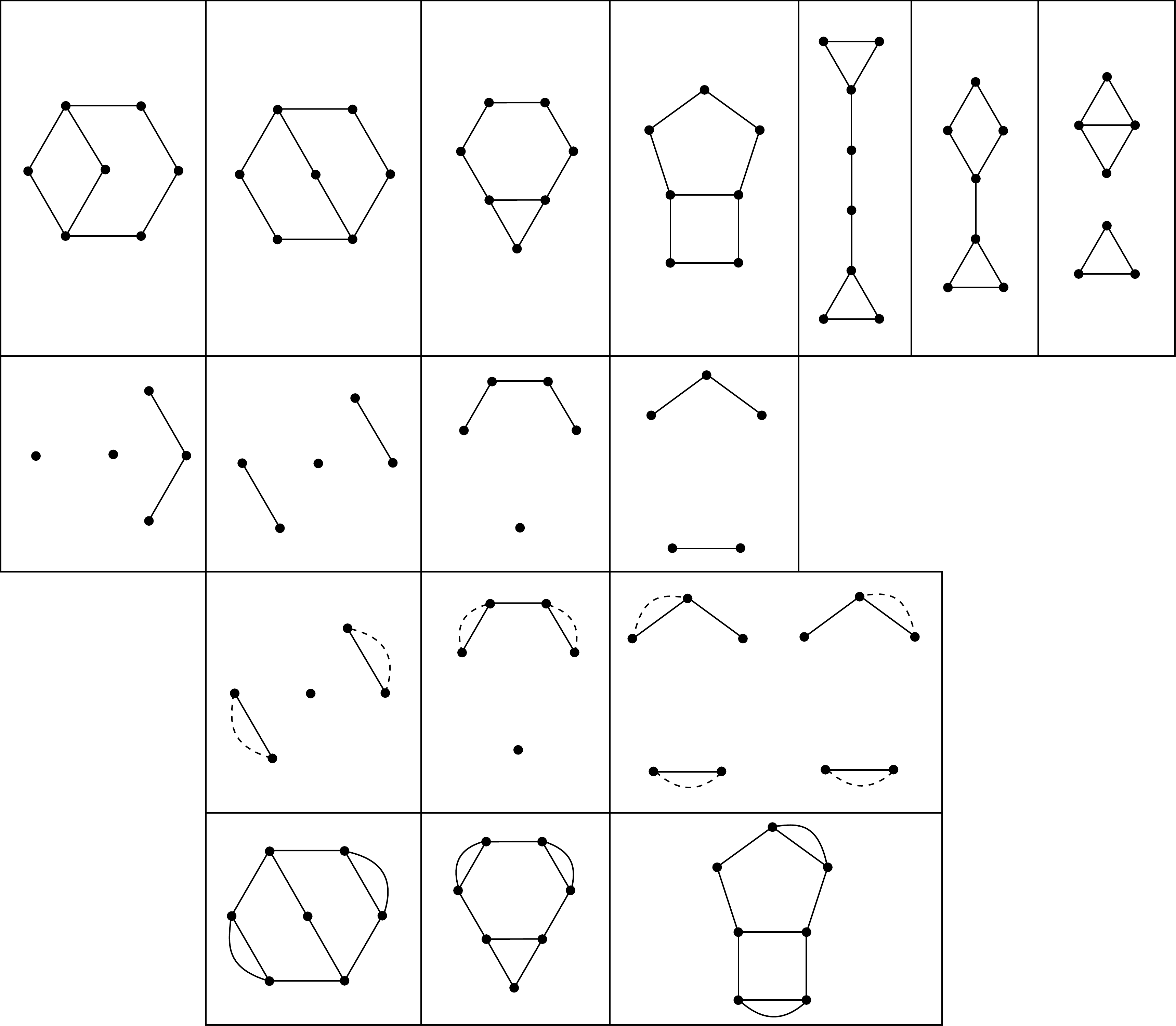}
\caption{Enumerating the graphs in $M(2,7)$. \emph{Top row}: the graphs in $S(2,7)$; the three graphs on the right are not 2-vertex-connected, so they are
not considered further. \emph{Second row}: $f(H)$ for the remaining graphs $H$; the graph on the left has multiple odd paths, so it is not considered
further. \emph{Third row}: all possible 2-matchings of the remaining graphs in the second row. \emph{Bottom row}: adding double edges specified by the
matchings to obtain the graphs in $M(2,7)$; the two matchings of the graph on the right happen to result in isomorphic multigraphs.}
\label{enumeration2}
\end{center}
\end{figure}

\end{document}